\crefname{equation}{}{}
\newtheorem{theorem}{Theorem}[section]
\newtheorem{proposition}[theorem]{Proposition}
\newtheorem{lemma}[theorem]{Lemma}
\newtheorem{conjecture}[theorem]{Conjecture}
\newtheorem*{question*}{Question} \Crefname{question}{Question}{Questions}
\theoremstyle{definition}
\newtheorem{question}[theorem]{Question}
\theoremstyle{remark}
\DeclareMathOperator{\stab}{stab}
\newcommand{\Mod}[1]{\ (\text{mod}\ #1)}
\newcommand\numberthis{\addtocounter{equation}{1}\tag{\theequation}}
\title{$C$-$(k, \ell)$-Sum-Free Sets}
\author[Zhang]{Rachel Zhang}
\address{Department of Mathematics, MIT, Cambridge, MA 02139, USA}
\email{rachelyz@mit.edu}
\date{September 2020}
\begin{document}

\maketitle

\begin{abstract}
    The Minkowski sum of two subsets $A$ and $B$ of a finite abelian group $G$ is defined as all pairwise sums of elements of $A$ and $B$: $A + B = \{ a + b : a \in A, b \in B \}$. The largest size of a $(k, \ell)$-sum-free set in $G$ has been of interest for many years and in the case $G = \mathbb{Z}/n\mathbb{Z}$ has recently been computed by Bajnok and Matzke. Motivated by sum-free sets of the torus, Kravitz introduces the \emph{noisy Minkowski sum} of two sets, which can be thought of as discrete evaluations of these continuous sumsets. That is, given a noise set $C$, the noisy Minkowski sum is defined as $A +_C B = A + B + C$. We give bounds on the maximum size of a $(k, \ell)$-sum-free subset of $\mathbb{Z}/n\mathbb{Z}$ under this new sum, for $C$ equal to an arithmetic progression with common difference relatively prime to $n$ and for any two element set $C$.

\end{abstract}

\section{Introduction}

Given a finite abelian group $G$ of order $n$, the \emph{Minkowski sum} of two subsets $A, B \subseteq G$ is the set of pairwise sums, defined to be
\[
    A + B = \{ a + b\ |\ a \in A, b \in B \}.
\]
We also use $A - B$ to denote the pairwise differences of elements of $A$ and $B$:
\[
    A - B = \{ a - b\ |\ a \in A, b \in B \}.
\]
Extending these definitions, for an integer $k \ge 1$, we define $kA = A + \dots + A$, where there are $k$ copies of $A$ in the summation. 

For integers $k, \ell \ge 1$, we say that $A$ is \emph{$(k, \ell)$-sum-free} if $kA \cap \ell A = \emptyset$. Let $\mu_{k, \ell}(G)$ denote the largest possible size of a $(k, \ell)$-sum-free subset, that is,
\[
    \mu_{k, \ell}(G) = \max \{ |A|\ |\ \text{$A \subseteq G$ is $(k, \ell)$-sum-free} \}.
\]
Note that if $k = \ell$, then $kA = \ell A$, so we may assume that $k > \ell$.

When $k = 2$ and $\ell = 1$, we see that $\mu_{2, 1}(\mathbb{Z}/n\mathbb{Z})$ is simply the maximal size of a normal sum-free set in $\mathbb{Z}/n\mathbb{Z}$. The value of $\mu_{2, 1}(\mathbb{Z}/n\mathbb{Z})$ was first calculated by Diamanda and Yap in 1969.

\begin{theorem}[\cite{diamanda-yap}, Lemma 3]
    For any positive integer $n$,
    \[
        \mu_{2, 1}(\mathbb{Z}/n\mathbb{Z}) = \max_{d | n} \left\{ \left\lceil \frac{d-1}{d} \right\rceil \cdot \frac{n}{d} \right\}.
    \]
\end{theorem}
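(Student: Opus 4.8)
The plan is to prove separately the lower bound $\mu_{2,1}(\ZZ/n\ZZ)\ge\max_{d\mid n}\ceil{(d-1)/3}\cdot(n/d)$ and the matching upper bound, both by relating sum-free subsets of $\ZZ/n\ZZ$ to sum-free subsets of the cyclic quotients $\ZZ/d\ZZ$ with $d\mid n$. For $d\mid n$, let $H_d\le\ZZ/n\ZZ$ be the unique subgroup of order $n/d$ and $\phi_d\colon\ZZ/n\ZZ\to\ZZ/d\ZZ$ the quotient map, with $\ker\phi_d=H_d$.

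\emph{Lower bound.} Fix $d\mid n$ and exhibit a sum-free set $B\subseteq\ZZ/d\ZZ$ with $\abs B=\ceil{(d-1)/3}$: when $3\nmid d$, let $B$ be the reduction mod $d$ of the integer interval $\{x : d/3<x<2d/3\}$; when $3\mid d$, let $B$ be a nonzero coset of the index-$3$ subgroup of $\ZZ/d\ZZ$. In each case a short interval/residue computation confirms $(B+B)\cap B=\emptyset$ and $\abs B=\ceil{(d-1)/3}$. Then $A:=\phi_d^{-1}(B)$ is sum-free in $\ZZ/n\ZZ$, since a solution of $a+a'=a''$ with $a,a',a''\in A$ would push forward to one in $B$; and $\abs A=\abs B\cdot\abs{H_d}=\ceil{(d-1)/3}\cdot(n/d)$. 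Optimizing over $d$ gives the lower bound.

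\emph{Upper bound.} Let $A\subseteq\ZZ/n\ZZ$ be sum-free; we may assume $A\ne\emptyset$. Put $H:=\stab(A+A)$, let $d:=[\ZZ/n\ZZ:H]$ (so $d\mid n$), and write $\phi:=\phi_d$ and $\bar A:=\phi(A)$. Two facts drive the argument. \emph{(i)} $\bar A$ is sum-free in $\ZZ/d\ZZ$: since $A+A$ is a union of $H$-cosets and is disjoint from $A$, no $H$-coset meeting $A$ can meet $A+A$, so $\phi(A)$ and $\phi(A+A)=\bar A+\bar A$ are disjoint. \emph{(ii)} $\stab(\bar A+\bar A)$ is trivial: a lift to $\ZZ/n\ZZ$ of any element stabilizing $\bar A+\bar A=\phi(A+A)$ stabilizes $\phi^{-1}(\phi(A+A))=A+A$ (using $H$-periodicity of $A+A$), hence lies in $H$, hence projects to $0$. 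Now Kneser's addition theorem applied to $\bar A+\bar A$ in $\ZZ/d\ZZ$ gives, via (ii),
\[
  \abs{\bar A+\bar A}\;\ge\;2\,\abs{\bar A+\stab(\bar A+\bar A)}-\abs{\stab(\bar A+\bar A)}\;=\;2\abs{\bar A}-1,
\]
while sum-freeness (i) gives $\abs{\bar A+\bar A}\le d-\abs{\bar A}$. Combining, $3\abs{\bar A}\le d+1$, i.e. $\abs{\bar A}\le\floor{(d+1)/3}=\ceil{(d-1)/3}$. Since $A\subseteq\phi^{-1}(\bar A)$ we get $\abs A\le\abs{\bar A}\cdot\abs H\le\ceil{(d-1)/3}\cdot(n/d)\le\max_{e\mid n}\ceil{(e-1)/3}\cdot(n/e)$.

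\emph{Main obstacle.} The delicate step is the reduction to the quotient by $\stab(A+A)$ together with the correct invocation of Kneser's theorem: Kneser's inequality is only sharp after the stabilizer of the sumset is collapsed, so one must check that $\bar A$ remains sum-free and that $\stab(\bar A+\bar A)$ is genuinely trivial before the clean bound $\abs{\bar A+\bar A}\ge 2\abs{\bar A}-1$ becomes available. One should also dispose of the degenerate cases: $A=\emptyset$ is immediate, and a nonempty sum-free $A$ cannot satisfy $A+A=\ZZ/n\ZZ$, so $d\ge 2$ and all quantities are well defined. The remaining verifications — the interval computation in the lower bound and the elementary identity $\floor{(d+1)/3}=\ceil{(d-1)/3}$ for integers $d\ge1$ — are routine.
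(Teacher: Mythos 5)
This theorem is quoted from Diananda--Yap \cite{diamanda-yap} as background, and the paper gives no proof of it, so there is no internal argument to compare yours against; your proposal has to be judged on its own. Two remarks. First, the displayed formula in the paper contains a typo: $\left\lceil \frac{d-1}{d} \right\rceil$ should be $\left\lceil \frac{d-1}{3} \right\rceil$, i.e.\ the $k+\ell=3$ case of the Hamidoune--Plagne and Bajnok--Matzke formulas quoted immediately after (the printed version fails already for $n=5$). You proved the corrected statement, which is the right call. Second, your proof is correct as written: in the lower bound, the middle-third interval (for $3\nmid d$) and a nonzero coset of the index-$3$ subgroup (for $3\mid d$) are sum-free of size $\lceil (d-1)/3\rceil$ in $\mathbb{Z}/d\mathbb{Z}$, and pulling back along $\phi_d$ preserves sum-freeness and scales the size by $n/d$; in the upper bound, quotienting by $H=\stab(A+A)$ is legitimate in a cyclic group (unique subgroup of each order), your checks (i) and (ii) are exactly the points that need care --- $H$-periodicity of $A+A$ together with $A\cap(A+A)=\emptyset$ gives sum-freeness of $\bar A$, and the lifting argument shows $\stab(\bar A+\bar A)$ is trivial --- after which Kneser gives $|\bar A+\bar A|\ge 2|\bar A|-1$, and combining with $|\bar A+\bar A|\le d-|\bar A|$ yields $|\bar A|\le\lfloor (d+1)/3\rfloor=\lceil (d-1)/3\rceil$; the degenerate cases ($A=\emptyset$, $A+A=\mathbb{Z}/n\mathbb{Z}$) are handled. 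Methodologically this is not Diananda and Yap's original route (an elementary analysis organized around the prime divisors of $n$ modulo $3$), but the modern Kneser-plus-stabilizer-quotient argument, which is essentially the $C=\{0\}$, $k+\ell=3$ instance of the machinery this paper itself uses later (the stabilizer inclusion of Lemma~\ref{lemma:substab}, Kneser-based sumset lower bounds, and pullback constructions along $\psi_{n,e}$); that choice makes your proof shorter and consistent with the paper's toolkit, at the cost of invoking Kneser's theorem rather than staying elementary.
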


In 2003, Hamidoune and Plagne extended this result to $(k, \ell)$-sum-free sets for which $n$ and $k - \ell$ are relatively prime:

\begin{theorem}[\cite{critical-pair}, Theorem 2.6]
    If $\gcd(n, k - \ell) = 1$, then
    \[
        \mu_{k, \ell}(\mathbb{Z}/n\mathbb{Z}) = \max \left\{ \left\lceil \frac{d-1}{k + \ell} \right\rceil \cdot \frac{n}{d} \right\}.
    \]
\end{theorem}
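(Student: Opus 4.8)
The plan is to prove the two inequalities separately. Throughout write $G = \mathbb{Z}/n\mathbb{Z}$ and $g = k - \ell$; since $\gcd(n,g) = 1$, also $\gcd(d,g) = 1$ for every divisor $d \mid n$, and this coprimality will be used only in the construction below.

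\emph{Lower bound.} For a divisor $d \mid n$ let $\pi \colon G \to \mathbb{Z}/d\mathbb{Z}$ be the quotient map. If $B \subseteq \mathbb{Z}/d\mathbb{Z}$ is $(k,\ell)$-sum-free then so is $\pi^{-1}(B)$, because $k\pi^{-1}(B) \subseteq \pi^{-1}(kB)$ and $\ell\pi^{-1}(B) \subseteq \pi^{-1}(\ell B)$ while $\pi^{-1}(kB) \cap \pi^{-1}(\ell B) = \emptyset$; and $\abs{\pi^{-1}(B)} = \abs{B}\cdot(n/d)$. So it suffices to construct a $(k,\ell)$-sum-free $B \subseteq \mathbb{Z}/d\mathbb{Z}$ with $\abs B = m := \ceil{\tfrac{d-1}{k+\ell}}$. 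I would take $B$ to be a block of $m$ consecutive residues, $B = \set{t+1,\dots,t+m}$ for a parameter $t$ to be chosen. Then $kB$ reduces mod $d$ to an arc $I_k$ of $k(m-1)+1$ consecutive residues starting at $k(t+1)$, and $\ell B$ to an arc $I_\ell$ of $\ell(m-1)+1$ residues starting at $\ell(t+1)$. Since $m-1 < \tfrac{d-1}{k+\ell}$, we have $(k+\ell)(m-1)+2 \le d$, so the two arcs together are short enough to be placed disjointly; concretely $I_k \cap I_\ell = \emptyset$ exactly when the offset $-g(t+1) \bmod d$ lies in a fixed interval of length $d - (k+\ell)(m-1) - 1 \ge 1$, and because $\gcd(g,d)=1$ the map $t \mapsto -g(t+1)$ is a bijection of $\mathbb{Z}/d\mathbb{Z}$, so a valid $t$ exists.

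\emph{Upper bound.} Here I would combine Kneser's theorem with a stabilizer-coincidence step. Fix a $(k,\ell)$-sum-free $A \subseteq G$ of maximum size $\mu := \mu_{k,\ell}(G)$ (if $\mu = 0$ there is nothing to prove), and set $T = \stab(kA)$ and $S = \stab(\ell A)$. First, $A$ is $T$-periodic: the set $A+T$ is again $(k,\ell)$-sum-free, since $k(A+T) = kA$ while $\ell(A+T) = \ell A + T$ is disjoint from the $T$-periodic set $kA$ (using $kA - t = kA$ for $t \in T$), so maximality forces $A + T = A$. By the same argument $A$ is $S$-periodic, hence periodic under $U := T + S$; but then $kA$ and $\ell A$ are $U$-periodic, which forces $U \subseteq T$ and $U \subseteq S$, and since $T,S \subseteq U$ we get $T = S = U =: H$. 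Now Kneser's theorem applied to $kA$ and to $\ell A$, each of which has stabilizer exactly $H$, gives $\abs{kA} \ge (k\abs{\bar A} - (k-1))\abs H$ and $\abs{\ell A} \ge (\ell\abs{\bar A} - (\ell-1))\abs H$, where $\bar A$ denotes the image of $A$ in $G/H$. Disjointness of $kA$ and $\ell A$ yields $\abs{kA} + \abs{\ell A} \le n = d\abs H$ with $d := [G:H] \mid n$, hence $(k+\ell)\abs{\bar A} - (k+\ell-2) \le d$, which after a short ceiling computation gives $\abs{\bar A} \le \ceil{\tfrac{d-1}{k+\ell}}$. Finally $\mu = \abs A \le \abs{A+H} = \abs{\bar A}\cdot\abs H \le \ceil{\tfrac{d-1}{k+\ell}}\cdot\tfrac nd$, which is at most $\max_{d \mid n} \ceil{\tfrac{d-1}{k+\ell}}\cdot\tfrac nd$.

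The step I expect to be the crux is the middle claim of the upper bound: that for a \emph{maximum-size} $(k,\ell)$-sum-free $A$ one has $\stab(kA) = \stab(\ell A)$. Without it, $\abs{kA}$ and $\abs{\ell A}$ are being estimated through two different quotient groups and the clean inequality $(k+\ell)\abs{\bar A} \le d + k + \ell - 2$ is not available. The device that makes it work is the ``replace $A$ by $A + \stab(kA)$'' maximality trick, which quietly relies on $G \setminus kA$ being $\stab(kA)$-periodic. It is worth recording that this argument in fact proves the upper bound $\mu_{k,\ell}(\mathbb{Z}/n\mathbb{Z}) \le \max_{d \mid n} \ceil{\tfrac{d-1}{k+\ell}}\cdot\tfrac nd$ with no hypothesis on $\gcd(n,k-\ell)$; coprimality is needed only so that the construction in the first paragraph attains this bound.
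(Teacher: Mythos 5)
The paper gives no proof of this statement: it is quoted verbatim from Hamidoune--Plagne \cite{critical-pair}, so there is no internal argument to compare against, and your proposal must be judged on its own. On that basis it is correct. Your lower bound (pull back an interval from $\mathbb{Z}/d\mathbb{Z}$, using $\gcd(k-\ell,d)=1$ to rotate the arc $kB-\ell B$ off $0$) and your upper bound (for a maximum-size $A$, the ``replace $A$ by $A+\stab(kA)$'' trick forces $\stab(kA)=\stab(\ell A)=:H$ and $A+H=A$, after which Kneser in $G/H$ and disjointness give $\abs{\bar A}\le\ceil{\tfrac{d-1}{k+\ell}}$ with $d=[G:H]$) both check out, including the ceiling manipulation $\lfloor\tfrac{d-2}{k+\ell}\rfloor+1\le\lceil\tfrac{d-1}{k+\ell}\rceil$. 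This route differs from the cited source: Hamidoune and Plagne argue through their critical-pair (Vosper/Kemperman-type) machinery, whereas your argument needs only Kneser's theorem plus a maximality/stabilizer argument, which is much closer in spirit to what this paper itself does in Sections 3 and 4 (Lemma~\ref{lemma:substab}, Lemma~\ref{lemma:kneser_bound}, Lemma~\ref{lemma:0s-bound-stab}) and to Bajnok's treatment in \cite{bajnok-arith}; it also isolates cleanly that coprimality is needed only for the construction, the upper bound holding for all $n,k,\ell$, which is consistent with the Bajnok--Matzke formula since $\delta-r\ge 1$. The one step you state without justification is the $k$-fold Kneser inequality $\abs{kA}\ge\paren{k\abs{\bar A}-(k-1)}\abs{H}$; in your setting it follows quickly from the two-summand version by passing to $G/H$ (where $A$ is a union of $H$-cosets) and iterating, noting that every intermediate stabilizer $\stab(j\bar A)$ is contained in $\stab(k\bar A)=\set{0}$ by the same observation as Lemma~\ref{lemma:substab}, so you should record that reduction, but it is routine and does not affect correctness.
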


In 2018, Bajnok and Matzke finished the problem of determining $\mu_{k, \ell}(\mathbb{Z}/n\mathbb{Z})$ by calculating $\mu_{k, \ell}(\mathbb{Z}/n\mathbb{Z})$, even when $n$ and $k - \ell$ are not relatively prime.

\begin{theorem}[\cite{bajnok-max}, Theorem 6]
    For positive integers $n, k, \ell$ with $k > \ell$,
    \[
        \mu_{k, \ell}(\mathbb{Z}/n\mathbb{Z}) = \max_{d | n} \left\{ \left\lceil \frac{d - (\delta - r)}{k + \ell} \right\rceil \cdot \frac{n}{d} \right\},
    \]
    where $\delta = \gcd(n, k - \ell)$, $f = \left\lceil \frac{d - \delta}{k + \ell} \right\rceil$, and $r$ is the remainder of $\ell f$ modulo $\delta$. 
\end{theorem}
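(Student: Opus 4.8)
\section*{Proof proposal}

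The plan is to establish the two inequalities separately. For the lower bound one exhibits, for each divisor $d \mid n$, a $(k,\ell)$-sum-free subset of $\ZZ/n\ZZ$ of size exactly $\ceil{\frac{d-(\delta-r)}{k+\ell}}\cdot\frac nd$; for the upper bound one shows that every $(k,\ell)$-sum-free set has size at most the maximum over $d$ of this quantity. Write $G = \ZZ/n\ZZ$ and $\delta = \gcd(n, k-\ell)$ throughout.

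For the lower bound the basic tool is the quotient homomorphism $\pi \colon \ZZ/n\ZZ \to \ZZ/d\ZZ$: if $S \subseteq \ZZ/d\ZZ$ satisfies $kS \cap \ell S = \emptyset$, then $A = \pi^{-1}(S)$ has $|A| = |S|\cdot\frac nd$ and is $(k,\ell)$-sum-free, since $\pi^{-1}(kS) = kA$ and $\pi^{-1}(\ell S) = \ell A$. So it is enough to build, inside $\ZZ/d\ZZ$, a $(k,\ell)$-sum-free set of size $\ceil{\frac{d-(\delta-r)}{k+\ell}}$. When $\gcd(d,k-\ell) = 1$ one takes $S$ to be an arithmetic progression of length $s$; after dilating by a unit $S$ becomes an interval, $kS$ and $\ell S$ become intervals of lengths $k(s-1)+1$ and $\ell(s-1)+1$, and because the offset $(k-\ell)u$ between their left endpoints runs over all of $\ZZ/d\ZZ$ as the base point $u$ varies, the two can be placed disjointly as soon as $(k+\ell)(s-1)+1 \le d$; this reproduces the theorem of Hamidoune and Plagne. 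For general $\delta$ the difficulty is that a set concentrated in a single residue class modulo $\delta$ has $kA$ and $\ell A$ in the same class (as $\delta \mid k-\ell$), which is useless for disjointness, so instead $S$ is taken to be a union of intervals distributed over several residue classes modulo $\delta$ (or a divisor thereof), and one optimizes the number of classes used against the interval length in each. The resulting bookkeeping is exactly what produces the ``generic'' count $f = \ceil{\frac{d-\delta}{k+\ell}}$ together with the correction governed by $r = \ell f \bmod \delta$, which measures in how many classes an extra element can still be inserted without $kS$ and $\ell S$ colliding.

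For the upper bound, let $A \subseteq G$ be $(k,\ell)$-sum-free. The starting point is $|kA| + |\ell A| \le n$, since $kA$ and $\ell A$ are disjoint. Applying Kneser's theorem to each iterated sumset, with $H_k = \stab(kA)$ and $H_\ell = \stab(\ell A)$, gives $|kA| \ge k|A + H_k| - (k-1)|H_k|$ and likewise for $\ell A$. If both stabilizers are trivial this yields $(k+\ell)|A| - (k+\ell) + 2 \le n$, which after rearranging is precisely the $d = n$, $\delta = 1$ term of the formula; sharper versions of Kneser's theorem (or a direct counting argument on the ``boundary'' of $kA$) are needed to obtain the exact constant in the remaining cases. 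If some stabilizer $H$ is nontrivial, of index $d' \mid n$, then $A + H$ is a union of $H$-cosets, and pushing forward to $G/H \cong \ZZ/d'\ZZ$ reduces the estimate to a strictly smaller group, so one argues by induction on $n$; the base cases, and the fact that $\gcd(d', k-\ell)$ need not equal $\delta$, are handled by the same case distinction that appears in the construction.

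I expect the main obstacle to be making the two sides agree exactly rather than up to an additive constant. Both the optimal construction and the extremal bound hinge on a delicate case analysis according to the residue of $\ell f$ (equivalently, of the relevant sumset offsets) modulo $\delta$, and one must verify that the divisor $d$ maximizing $\ceil{\frac{d-(\delta-r)}{k+\ell}}\cdot\frac nd$ is exactly the one dictated by the stabilizer structure on the upper-bound side. Reconciling the floor-and-ceiling arithmetic in every residue class --- which is where the $\delta - r$ term, and hence the improvement over the $\gcd(n,k-\ell)=1$ case of Hamidoune and Plagne, genuinely enters --- is the crux of the argument.
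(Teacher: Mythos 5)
This statement is not proved in the paper at all: it is quoted verbatim as Theorem~6 of Bajnok and Matzke \cite{bajnok-max}, so there is no in-paper argument to compare against, and your proposal has to stand on its own as a proof of the cited result. Its overall architecture does match the standard route (pulling back $(k,\ell)$-sum-free sets through quotient maps $\mathbb{Z}/n\mathbb{Z} \to \mathbb{Z}/d\mathbb{Z}$ for the lower bound, Kneser's theorem plus induction on a nontrivial stabilizer for the upper bound), but as written it is a roadmap rather than a proof, and the places you leave open are exactly where the content of the theorem lies.

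Concretely, two gaps. First, for the lower bound with $\delta = \gcd(n,k-\ell) > 1$ you never exhibit a set: you assert that distributing intervals over residue classes modulo $\delta$ and ``optimizing the bookkeeping'' produces the count $f = \left\lceil \frac{d-\delta}{k+\ell} \right\rceil$ with correction $r = \ell f \bmod \delta$, but that bookkeeping \emph{is} the theorem --- without an explicit construction of size $\left\lceil \frac{d-(\delta-r)}{k+\ell} \right\rceil$ and a verification that its $k$-fold and $\ell$-fold sumsets are disjoint, nothing beyond the Hamidoune--Plagne case has been established. Second, for the upper bound, plain Kneser with trivial stabilizers gives only $(k+\ell)|A| \le n + k + \ell - 2$, i.e. $|A| \le \left\lfloor \frac{n+k+\ell-2}{k+\ell} \right\rfloor$; this coincides with $\left\lceil \frac{n-1}{k+\ell} \right\rceil$ when $\delta = 1$, but for $\delta > 1$ the target $\left\lceil \frac{n-(\delta-r)}{k+\ell} \right\rceil$ is in general strictly smaller, and you explicitly defer the needed strengthening (``sharper versions of Kneser's theorem \ldots are needed''). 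In the stabilizer-induction step there is a further unaddressed point: in the displayed formula $\delta$ is $\gcd(n,k-\ell)$ for every divisor $d$, so after passing to $G/H \cong \mathbb{Z}/d'\mathbb{Z}$ you must reconcile the bound coming from $\gcd(d',k-\ell)$ with the stated one, which you mention only in passing. Since you yourself flag these items as ``the crux,'' the proposal should be regarded as a plausible plan with genuine gaps, not a proof of the cited theorem.
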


We'll be more interested in a slightly different version of the Minkowski sum, motivated by sum-free sets of the torus $\mathbb{T}$. In \cite{kravitz}, Kravitz suggests the following problem: Consider the map $\varphi: \mathcal{P}(\mathbb{Z}/n\mathbb{Z}) \rightarrow \mathcal{P}(\mathbb{T})$ from subsets of $\mathbb{Z}/n\mathbb{Z}$ to subsets of the torus defined by $\varphi(A) = \bigcup_{i \in A} \left( \frac{i}{n}, \frac{i + 1}{n} \right)$. Then, since $\left( \frac{i}{n}, \frac{i + 1}{n} \right) + \left( \frac{j}{n}, \frac{j + 1}{n} \right) = \left( \frac{i + j}{n}, \frac{i + j + 2}{n} \right)$, we have that $\varphi(A) + \varphi(B) = \varphi(A + B + \{ 0, 1 \})$. That is, the normal Minkowski sum of the union of certain intervals in the torus corresponds to a new kind of sum of subsets of $\mathbb{Z}/n\mathbb{Z}$.

With this motivation, Kravitz defines what we will call the \emph{noisy Minkowski sum} of two sets with a set $C$: given sets $A, B, C \subseteq G$, let 
\[
    A +_C B = A + B + C = \{ a + b + c\ |\ a \in A, b \in B, c \in C \}.
\]
For a given set $C$, this operation can be understood as taking the normal Minkowski sum and adding some noise given by $C$. Then, define $k *_C A = kA + (k-1)C$. Note that when $C = \{ 0 \}$, we recover the normal Minkowski sum. 

The quantity we are interested in is 
\[
    \mu_{k, \ell}^C(G) = \max \{ |A|\ |\ A \subseteq G,\ k *_C A \cap \ell *_C A = \emptyset \},
\]
the maximum size of a \emph{$C$-$(k, \ell)$-sum-free set} of $G$. 

In his paper, Kravitz asks about the value of $\mu_{k, \ell}^{\{ 0, 1 \}}(\mathbb{Z}/n\mathbb{Z})$. Note that maximal $\{ 0, 1 \}$-$(k, \ell)$-sum-free subsets of $\mathbb{Z}/n\mathbb{Z}$ correspond to maximal $(k, \ell)$-sum-free subsets of $\mathbb{T}$, restricting to sets of the form $\bigcup_{i \in I} \left( \frac{i}{n}, \frac{i + 1}{n} \right)$. Thus, calculating $\mu_{k, \ell}^{\{ 0, 1 \}}(\mathbb{Z}/n\mathbb{Z})$ answers a less granular version of the largest $(k, \ell)$-sum-free set in the torus.

In this paper, we address Kravitz's question about $\mu_{k, \ell}^{\{ 0, 1 \}}(\mathbb{Z}/n\mathbb{Z})$ as well as give bounds on $\mu_{k, \ell}^C(\mathbb{Z}/n\mathbb{Z})$ for some other values of $C$. In particular, for $C = \{ 0, 1, \dots, c-1 \}$, we prove the following bounds:

\begin{theorem} \label{thm:01c-main}
    For $c \ge 2$ and $C = \{ 0, 1, \dots, c-1 \}$, we have
    \[
        \left\lfloor \frac{n + 2(c-2) - r}{k + \ell} \right\rfloor - (c-2)
        \le \mu_{k, \ell}^C(\mathbb{Z}/n\mathbb{Z})
        \le \left\lfloor \frac{n + 2(c-2)}{k + \ell} \right\rfloor - (c-2),
    \]
    where $\delta = \gcd(n, k - \ell)$, and $r$ is the remainder of $-k \cdot \left\lfloor \frac{n + 2(c-2)}{k + \ell} \right\rfloor + (c-2)$ modulo $\delta$. 
\end{theorem}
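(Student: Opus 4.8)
The plan is to translate the $C$-$(k,\ell)$-sum-free condition into a sumset ``avoidance'' statement and then attack it with Kneser's theorem for the upper bound and with an explicit interval construction for the lower bound. Since $c-1$ copies of the interval $\{0,\dots,c-1\}$ add up freely, the $(k-1)$-fold sumset of $C=\{0,1,\dots,c-1\}$ is $(k-1)C=\{0,1,\dots,(k-1)(c-1)\}$, so $k*_C A = kA + \{0,1,\dots,(k-1)(c-1)\}$. Writing $k*_C A \cap \ell *_C A = \emptyset$ out in terms of representatives, it holds iff $\sigma-\tau \notin \{v-u : 0\le u\le (k-1)(c-1),\ 0\le v\le (\ell-1)(c-1)\}$ for all $\sigma\in kA$, $\tau \in \ell A$, that is,
\[
    A \text{ is $C$-$(k,\ell)$-sum-free} \iff (kA-\ell A)\cap I_0 = \emptyset ,
\]
where $I_0 := \{-(k-1)(c-1),\dots,(\ell-1)(c-1)\}$ is an interval of $L := (k+\ell-2)(c-1)+1$ consecutive residues (in the relevant range $L\le n$; if $L>n$ there is no nonempty admissible set and the bounds hold trivially). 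This equivalence drives both directions.

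For the upper bound, let $A\neq\emptyset$ be $C$-$(k,\ell)$-sum-free, put $D := kA-\ell A = kA + \ell(-A)$, and let $H := \stab(D)$. Since $D$ misses $I_0$ it is a proper subgroup-periodic set, so $L < n/|H|$: otherwise the $L$ consecutive residues of $I_0$ meet every coset of $H$, forcing $I_0+H=\mathbb{Z}/n\mathbb{Z}$ and hence $D=\emptyset$. Thus $|I_0+H| = L|H|$, and since $D$ is $H$-periodic and disjoint from $I_0+H$ we get $|D| \le n - L|H|$. Kneser's theorem applied to the $(k+\ell)$-fold sum gives $|D| \ge (k+\ell)|A+H| - (k+\ell-1)|H|$, using $|{-A}+H|=|A+H|$. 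Combining the two inequalities and simplifying with $L=(k+\ell-2)(c-1)+1$ yields $(k+\ell)|A+H| \le n - |H|(k+\ell-2)(c-2)$; since $|A|\le|A+H|$ and $|H|\ge 1$ this gives $(k+\ell)|A| \le n - (k+\ell-2)(c-2)$, and dividing and taking the floor (which commutes with subtracting the integer $c-2$) yields exactly $\mu_{k,\ell}^C(\mathbb{Z}/n\mathbb{Z}) \le \lfloor \frac{n+2(c-2)}{k+\ell}\rfloor - (c-2)$.

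For the lower bound I would take $A$ to be an interval $\{t,t+1,\dots,t+s-1\}$, so that $kA-\ell A = (k-\ell)t + \{-\ell(s-1),\dots,k(s-1)\}$ is again an interval, of length $(k+\ell)(s-1)+1$, whose location modulo $n$ can be translated by an arbitrary multiple of $\delta := \gcd(n,k-\ell)$ as $t$ varies. By the equivalence above we need this interval to sit inside the complement of $I_0$, a cyclic arc of length $n-L$, with its left endpoint in a prescribed residue class modulo $\delta$; this amounts to a length bound on $s$ plus one congruence constraint. The computations to carry out are: (i) at $s^* := \lfloor\frac{n+2(c-2)}{k+\ell}\rfloor-(c-2)$ the available slack equals $\beta+1$, where $\beta := (n+2(c-2))\bmod (k+\ell)$; (ii) the residue class the endpoint must match works out precisely to the quantity $r$ in the statement, where one uses $k\equiv\ell \pmod{\delta}$; and (iii) since $\delta \le k-\ell < k+\ell$, the value $s^*-1$ can always be placed regardless of the congruence. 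Together these show that an interval of size $\lfloor\frac{n+2(c-2)-r}{k+\ell}\rfloor-(c-2)$ is always realizable, which is the asserted lower bound.

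I expect the real work to lie in the lower bound's number-theoretic bookkeeping — identifying the exact residue class the interval's endpoint must occupy, checking that it equals the stated $r$, and handling the edge cases where the slack $\beta+1$ is small — whereas the upper bound should reduce, once the avoidance reformulation is in place, to the short Kneser computation sketched above.
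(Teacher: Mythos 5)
Your proposal is correct, and its two halves sit differently relative to the paper. The lower bound is essentially the paper's own proof: you take an interval $A=\{t,\dots,t+s-1\}$, note that $kA-\ell A$ is an interval whose position moves over the multiples of $\delta=\gcd(n,k-\ell)$ as $t$ varies, and reduce to placing a multiple of $\delta$ in a window of length $n-(k+\ell)s-(k+\ell-2)(c-2)+1$; your items (i)--(iii) (slack $\beta+1$ at your $s^*$, identification of the required residue with $r$, and unconditional placement at $s^*-1$ since $\delta\le k-\ell<k+\ell$) are exactly the computations carried out in Section~3, so what you leave unexecuted is only the routine verification of (ii) and the check that the single expression $\lfloor\frac{n+2(c-2)-r}{k+\ell}\rfloor-(c-2)$ equals $s^*$ or $s^*-1$ according to whether the congruence can be met. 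The upper bound is where you genuinely depart: the paper never forms $kA-\ell A$, but proves Lemma~\ref{lemma:kneser_bound}, $|A+_CB|\ge\min\{n,|A|+|B|+(c-2)\}$, by a two-summand application of Kneser with the interval $C$ as one summand and an index-versus-$c$ analysis of the stabilizer, then iterates it to bound $|k*_CA|$ and $|\ell*_CA|$ separately and adds; you instead recast sum-freeness as $(kA-\ell A)\cap I_0=\emptyset$ with $|I_0|=(k+\ell-2)(c-1)+1$ and apply Kneser once to $D=kA-\ell A$, using $H$-periodicity of $D$ to get $|D|\le n-|I_0|\,|H|$, which is a clean single-shot computation producing the identical bound. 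Be aware, though, that this uses the multi-summand form of Kneser's theorem, $|A_1+\cdots+A_s|\ge\sum_i|A_i+H|-(s-1)|H|$ with $H$ the stabilizer of the \emph{full} sumset; this is standard (e.g.\ in Grynkiewicz's book) but strictly more than the two-summand statement quoted in the paper, and it does not follow from it by naive iteration --- the difficulty of tracking stabilizers of partial sums is precisely what the paper's Lemma~\ref{lemma:kneser_bound}, via Lemma~\ref{lemma:substab}, is engineered to avoid --- so you should cite or prove that version. Both your argument and the paper's implicitly assume $A\neq\emptyset$, which only matters in degenerate small-$n$ cases.
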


Note that when $\gcd(n, k - \ell) = 1$, $r < \delta = 1$, so $r = 0$, and the two sides of the bound are equal. In this case (and in many others), by taking $c = 2$, this theorem gives an explicit answer to Kravitz's question. When the two sides are not equal, we have that $r < \delta = \gcd(n, k - \ell) \le k - \ell < k + \ell$, so the upper and lower bounds can differ by at most 1.

We also consider two element sets $C$ of the form $\{ 0, s \}$. We prove the following bounds on $\mu_{k, \ell}^{\{ 0, s \}}(\mathbb{Z}/n\mathbb{Z})$:

\begin{theorem}\label{thm:0s-main}
    For $k > \ell$, we have
    \begin{align*}
        \mu_{k, \ell}^{\{ 0, s \}}(\mathbb{Z}/n\mathbb{Z})
        &\ge \max \left\{ 
            \max_{e | d} \left\{ \mu_{k, \ell}(\mathbb{Z}/e\mathbb{Z}) \cdot \frac{n}{e} \right\},\ 
            \left\lfloor \frac{n + 2(s-1) - r}{k + \ell} \right\rfloor - (s - 1)
        \right\} \\
        \mu_{k, \ell}^{\{ 0, s \}}(\mathbb{Z}/n\mathbb{Z}) 
        &\le \max \left\{
            \max_{e | d} \left\{ \mu_{k, \ell}(\mathbb{Z}/e\mathbb{Z}) \cdot \frac{n}{e} \right\},\ 
            \left\lfloor \frac{n}{k + \ell} \right\rfloor
        \right\},
    \end{align*}
    where $\delta = \gcd(n, k - \ell)$ and $r$ is the remainder of $-k\cdot \left\lfloor \frac{n + 2(s-1)}{k + \ell} \right\rfloor +(s - 1)$ modulo $\delta$.
\end{theorem}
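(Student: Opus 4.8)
The plan is to establish the two bounds separately. Throughout, write $d=\gcd(n,s)$ and $H=\langle s\rangle\le\mathbb{Z}/n\mathbb{Z}$, so $[\mathbb{Z}/n\mathbb{Z}:H]=d$.

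For the lower bound I would give two constructions, matching the two terms in the maximum. First, for each divisor $e\mid d$ let $\pi\colon\mathbb{Z}/n\mathbb{Z}\to\mathbb{Z}/e\mathbb{Z}$ be the quotient map; since $e\mid s$ we have $\pi(s)=0$, so the noise $(k-1)\{0,s\}=\{0,s,\dots,(k-1)s\}$ lies in $\ker\pi$. Hence for any $(k,\ell)$-sum-free $B\subseteq\mathbb{Z}/e\mathbb{Z}$ the preimage $A=\pi^{-1}(B)$ satisfies $k*_{\{0,s\}}A=\pi^{-1}(kB)$ and $\ell*_{\{0,s\}}A=\pi^{-1}(\ell B)$ (adding a subset of $\ker\pi$ does not change a union of $\ker\pi$-cosets), so $A$ is $\{0,s\}$-$(k,\ell)$-sum-free with $|A|=|B|\cdot n/e$; taking $B$ extremal and maximizing over $e\mid d$ gives the first term. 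Second, observe $(k-1)\{0,s\}=\{0,s,\dots,(k-1)s\}\subseteq\{0,1,\dots,(k-1)s\}=(k-1)\{0,1,\dots,s\}$, and similarly for $\ell$, so every $\{0,1,\dots,s\}$-$(k,\ell)$-sum-free set is a fortiori $\{0,s\}$-$(k,\ell)$-sum-free; applying \Cref{thm:01c-main} with $c=s+1$ (so $c-2=s-1$, and the two definitions of $r$ coincide) yields the second term.

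For the upper bound I would first reduce to the case $s\mid n$. Multiplication by a unit $u\in(\mathbb{Z}/n\mathbb{Z})^\times$ is an automorphism carrying a $\{0,s\}$-$(k,\ell)$-sum-free set to a $\{0,us\}$-$(k,\ell)$-sum-free set of the same size, and one can pick $u$ with $us\equiv d\pmod n$ by lifting a solution of $u\cdot(s/d)\equiv1\pmod{n/d}$ (which exists since $\gcd(s/d,n/d)=1$) to a unit modulo $n$ via the Chinese Remainder Theorem; as $\gcd(n,d)=d$, the quantities $d$, $\delta$, $r$ and the set of divisors $e\mid d$ are unchanged, so it suffices to bound $\mu^{\{0,s\}}_{k,\ell}$ when $s=d\mid n$. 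Now split on whether $A$ is $H$-periodic. If $A+H=A$, write $A=\pi^{-1}(\overline A)$ for $\pi\colon\mathbb{Z}/n\mathbb{Z}\to\mathbb{Z}/d\mathbb{Z}$; exactly as above $k*_{\{0,s\}}A=\pi^{-1}(k\overline A)$ and $\ell*_{\{0,s\}}A=\pi^{-1}(\ell\overline A)$, so $\overline A$ is $(k,\ell)$-sum-free in $\mathbb{Z}/d\mathbb{Z}$ and $|A|=|\overline A|\cdot n/d\le\mu_{k,\ell}(\mathbb{Z}/d\mathbb{Z})\cdot n/d$, at most the first term. It remains to show that if $A$ is not $H$-periodic then $|A|\le\lfloor n/(k+\ell)\rfloor$.

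This last case is the main obstacle. The starting point is $k*_{\{0,s\}}A\cap\ell*_{\{0,s\}}A=\emptyset$, so $|k*_{\{0,s\}}A|+|\ell*_{\{0,s\}}A|\le n$, together with the identity $k*_{\{0,s\}}A=A+(k-1)A^{+}$, where $A^{+}=A+\{0,s\}=A\cup(A+s)$ (indeed $(k-1)A^{+}=(k-1)A+(k-1)\{0,s\}$). Since $A$ is not $H$-periodic, $A+s\ne A$, hence $|A^{+}|\ge|A|+1$. Applying Kneser's theorem to the $k$-fold sumset $A+(k-1)A^{+}$ and to $A+(\ell-1)A^{+}$ and adding gives $(k+\ell)|A|\le n+(\text{stabilizer corrections})$, but a crude bound on those corrections only produces $|A|\le\lfloor n/(k+\ell)\rfloor+1$, so the crux is to shave off the last unit. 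I expect this to require combining two observations: that $A^{+}$ strictly contains $A$, which puts an extra unit of room into the Kneser estimate through the $(k-1)$ and $(\ell-1)$ copies of $A^{+}$; and that the noise $(k-1)\{0,s\}$ is an arithmetic progression contained in $H$, so that any nontrivial common stabilizer of the two sumsets would be forced to contain $s$ and hence make $A$ itself $H$-periodic, contrary to assumption. Carrying this dichotomy through rigorously — especially handling the interplay between $\operatorname{Stab}(A)$ and $H$ when $\operatorname{Stab}(A)$ is nontrivial but does not contain $H$ — is the technical heart of the argument.
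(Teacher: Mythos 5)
Your lower bound is exactly the paper's argument (preimages of $(k,\ell)$-sum-free sets under $\psi_{n,e}$ for $e \mid d$, plus the containment $\{0,s\}\subseteq\{0,1,\dots,s\}$ and \Cref{thm:01c-main} with $c=s+1$), and that part is fine. The upper bound, however, has a genuine gap, and not just the one you acknowledge: your dichotomy is the wrong one. The statement you reduce to --- ``if $A$ is not $H$-periodic then $|A|\le\lfloor n/(k+\ell)\rfloor$'' --- is false. Take $n=30$, $s=10$ (so $d=10$), $k=2$, $\ell=1$, $e=5\mid d$, and $B=\{2,3\}\subseteq\mathbb{Z}/5\mathbb{Z}$, which is sum-free; let $A=\psi_{30,5}^{-1}(B)\setminus\{2\}$. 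Then $|A|=11>\lfloor 30/3\rfloor=10$, $A$ is $\{0,10\}$-$(2,1)$-sum-free (its sumsets stay inside the residue classes $\{0,1,4\}$ mod $5$ while $A$ lies in $\{2,3\}$ mod $5$), and $A$ is not $\langle 10\rangle$-periodic since $12\in A$ but $2\notin A$. The point is that conditioning on periodicity of $A$ itself only ever produces the single term $e=d$ in your first case, whereas sets like this one are governed by a proper divisor $e<d$ even though $A$ is aperiodic; no refinement of Kneser in your second case can rescue a false claim. On top of this, you explicitly leave open the ``shave off the last unit'' step, so even in the cases where your second claim is true the argument is incomplete.

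The paper's proof conditions instead on $K=\stab(k*_{\{0,s\}}A)$. If $K$ contains $\langle d\rangle$, i.e.\ $K=\langle e\rangle$ for some $e\mid d$, then $k*_{\{0,s\}}A$ is a union of $K$-cosets equal to $\psi_{n,e}^{-1}(k\,\psi_{n,e}(A))$, so if $|\psi_{n,e}(A)|>\mu_{k,\ell}(\mathbb{Z}/e\mathbb{Z})$ the sets $k*_{\{0,s\}}A$ and $\ell*_{\{0,s\}}A$ would meet; hence $|A|\le\mu_{k,\ell}(\mathbb{Z}/e\mathbb{Z})\cdot\frac{n}{e}$ --- this is how the aperiodic example above gets absorbed into the first term of the maximum. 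If $K$ does not contain $\langle d\rangle$, then \Cref{lemma:0s-bound-stab} proves $|j*_{\{0,s\}}A|\ge j|A|$ for all $j\le k$ by induction: writing $j*_{\{0,s\}}A=\bigl(A+((j-1)*_{\{0,s\}}A)\bigr)+\{0,s\}$ and applying Kneser with $J=\stab(j*_{\{0,s\}}A)$, one uses that $J\subseteq K$ (\Cref{lemma:substab}) forces $s\notin J$, so $|\{0,s\}+J|=2|J|$, and this extra $+|J|$ exactly cancels the $-|H|$ correction from the inner application of Kneser since $H=\stab(A+((j-1)*_{\{0,s\}}A))\subseteq J$. That cancellation, valid precisely under the sumset-stabilizer hypothesis rather than mere non-periodicity of $A$, is the mechanism you were missing for avoiding the $+1$ loss; summing the $j=k$ and $j=\ell$ bounds and using disjointness gives $(k+\ell)|A|\le n$. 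So you should replace your periodicity dichotomy with the stabilizer dichotomy and prove the inductive claim of \Cref{lemma:0s-bound-stab}; as written, your proposal does not yield the stated upper bound.
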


When $\max_{e | d} \left\{ \mu_{k, \ell}(\mathbb{Z}/e\mathbb{Z}) \cdot \frac{n}{e} \right\} \ge \lfloor \frac{n}{k + \ell} \rfloor$, we have that the two sides are in fact equal, i.e.
\[
    \mu_{k, \ell}^{\{ 0, s \}}(\mathbb{Z}/n\mathbb{Z})
    = \max_{e | d} \left\{ \mu_{k, \ell}(\mathbb{Z}/e\mathbb{Z}) \cdot \frac{n}{e} \right\}.
\]

Another specific case is when $s = p$ is prime and $p$ divides $k - \ell$. Then $\mu_{k, \ell}(\mathbb{Z}/p\mathbb{Z}) = 0$, so our inequality simplifies to
\[
    \left\lfloor \frac{n + 2(p-1) - r}{k + \ell} \right\rfloor - (p - 1)
    \le \mu_{k, \ell}^{\{ 0, p \}}(\mathbb{Z}/n\mathbb{Z}) 
    \le \left\lfloor \frac{n}{k + \ell} \right\rfloor.
\]

We will see in Section 2 that the bounds we've achieved for $C = \{ 0, 1, \dots, c-1 \}$ extend to any set that is an arithmetic progression of length $c$ with common difference relatively prime to $n$, and the bounds for $C = \{ 0, s \}$ hold for any two element set. In Section 3, we prove Theorem~\ref{thm:01c-main}, and in Section 4, we prove Theorem~\ref{thm:0s-main}. Finally in Section 5, we discuss some open question and conjectures.

\section{Transformations of $C$}

As there are many choices for $C$, we may seek to show that several sets are equivalent in the sense that they all have the same size of a maximal $C$-$(k, \ell)$-sum-free set. In this section we introduce two transformations of $C$ that preserve $\mu_{k, \ell}^C(\mathbb{Z}/n\mathbb{Z})$.

\begin{proposition} \label{prop:shift}
    For any $g \in \mathbb{Z}/n\mathbb{Z}$, we have that $\mu_{k, \ell}^C (\mathbb{Z}/n\mathbb{Z}) = \mu_{k, \ell}^{C + \{ g \}}(\mathbb{Z}/n\mathbb{Z})$.
\end{proposition}

\begin{proof}
    We have that 
    \begin{align*} 
        k *_C A 
        &= kA + (k-1)C \\
        &= kA + (k-1)(C + \{ g \}) + k \{ -g \} + \{ g \} \\
        &= k (A + \{ -g \}) + (k - 1)(C + \{ g \}) + \{ g \} \\
        &= k *_{C + \{ g \}} (A + \{ -g \}) + \{ g \},
    \end{align*}
    and similarly, $\ell *_C A = \ell *_{C + \{ g \}} (A + \{ -g \}) + \{ g \}$, therefore letting $B = A + \{ -g \}$, we have that 
    \[
        k *_C A = \ell *_C A 
        \Longleftrightarrow 
        k *_{C + \{ g \}} B = \ell *_{C + \{ g \}} B.
    \]
    Thus, the proposition follows. 
\end{proof}

\begin{proposition} \label{prop:mult}
    For any $g \in \mathbb{Z}/n\mathbb{Z}^\times$ and set $A$, let $A / g = \{ a / g : a \in A \}$. Then, $\mu_{k, \ell}^C (\mathbb{Z}/n\mathbb{Z}) = \mu_{k, \ell}^{C / g} (\mathbb{Z}/n\mathbb{Z})$.
\end{proposition}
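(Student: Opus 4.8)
The plan is to exploit that, since $g$ is a unit in $\mathbb{Z}/n\mathbb{Z}$, the map $\sigma \colon x \mapsto x/g = x \cdot g^{-1}$ is an automorphism of the additive group $\mathbb{Z}/n\mathbb{Z}$ — in particular a bijection — and that such a map commutes with all the set operations in play. Concretely, I would first record the two elementary identities
\[
    (X + Y)/g = X/g + Y/g
    \qquad\text{and}\qquad
    (mX)/g = m(X/g)
\]
for all subsets $X, Y \subseteq \mathbb{Z}/n\mathbb{Z}$ and all integers $m \ge 1$; both follow at once from $a/g + b/g = (a+b)/g$ together with the fact that $\sigma$ is a bijection, and the second is just the first iterated.

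Next I would apply these to the definition $k *_C A = kA + (k-1)C$ to get
\[
    (k *_C A)/g = (kA)/g + \bigl((k-1)C\bigr)/g = k(A/g) + (k-1)(C/g) = k *_{C/g} (A/g),
\]
and the identical computation with $\ell$ in place of $k$ yields $(\ell *_C A)/g = \ell *_{C/g}(A/g)$. Since $\sigma$ is a bijection of $\mathbb{Z}/n\mathbb{Z}$, two subsets are disjoint precisely when their images under $\sigma$ are disjoint; hence $kA \cap \ell A$-type disjointness is preserved, i.e.\ $A$ is $C$-$(k,\ell)$-sum-free if and only if $A/g$ is $(C/g)$-$(k,\ell)$-sum-free. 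Because $\sigma$ also preserves cardinality, $|A/g| = |A|$, the assignment $A \mapsto A/g$ is a size-preserving bijection from the collection of $C$-$(k,\ell)$-sum-free sets onto the collection of $(C/g)$-$(k,\ell)$-sum-free sets (with inverse $A \mapsto gA$). Taking maxima over these two collections gives $\mu_{k,\ell}^C(\mathbb{Z}/n\mathbb{Z}) = \mu_{k,\ell}^{C/g}(\mathbb{Z}/n\mathbb{Z})$.

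There is no real obstacle here: the content is entirely in the observation that dividing by a unit distributes over Minkowski sums. The one point worth stating cleanly is that the transformation is applied uniformly to $A$ and to $C$ simultaneously, so that $k *_C A$ and $\ell *_C A$ both transform in the same way under $\sigma$ and the disjointness condition is carried across intact; once that is made explicit, the proof is a two-line verification analogous to (and slightly simpler than) the shift argument in Proposition~\ref{prop:shift}.
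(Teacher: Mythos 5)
Your proposal is correct and is essentially the paper's own argument: the paper's proof likewise observes that division by the unit $g$ preserves disjointness and that $(k*_C A)/g = k*_{C/g}(A/g)$, you have merely written out the intermediate distributivity identities and the size-preserving bijection explicitly. No gaps; this matches the intended proof.
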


\begin{proof}
    For any set $A \subseteq \mathbb{Z}/n\mathbb{Z}$, we have that $k *_C A \cap \ell *_C A = \emptyset$ iff $(k *_C A) / g \cap (\ell *_C A) / g = \emptyset$, or equivalently, $k *_{C / g} (A / g) \cap \ell *_{C / g} (A / g) = \emptyset$. 
\end{proof}

We will call the operation in Proposition~\ref{prop:shift} \emph{shift} and the operation in Proposition~\ref{prop:mult} \emph{multiplication}, as these are the respective operations the propositions allow us to perform on $C$. We say that two sets $C$ and $D$ are \emph{shift-mult-equivalent} if, by applying a sequence of shifts and multiplications to $C$, one can attain $D$. Note since shifts and multiplications are invertible and composable, such sequences define an equivalence relation. 

In fact, two sets $C$ and $D$ are shift-mult-equivalent iff $D = g(C + \{ h \})$ for some elements $h \in \mathbb{Z}/n\mathbb{Z}$ and $g \in \mathbb{Z}/n\mathbb{Z}^\times$. As a result, $\{ 0, 1, \dots, c-1 \}$ is shift-mult-equivalent to any length $c$ arithmetic progression with common difference relatively prime to $n$, and $\{ 0, c \}$ is shift-mult-equivalent to any two element set whose elements have difference $\Delta$ such that $\gcd(n, \Delta) = \gcd(n, c)$. 



\section{Maximal $\{ 0, 1, \dots, c-1 \}$-$(k, \ell)$-Sum-Free Sets}

In this section we look at $C = \{ 0, 1, \dots, c-1 \}$, with $c \ge 1$. Note that when $c = 0$, $C = \{ 0 \}$, which has already been thoroughly investigated. Hence, we consider $c \ge 2$.

When $c = 2$, we have that $C = \{ 0, 1 \}$. Note that shifting and multiplying gives us that any set of two elements whose difference is relatively prime to $n$ is shift-mult-equivalent to $C$. Using $(k, \ell)$-sum-free sets on the torus, we may give a natural upper bound for $\mu_{k, \ell}^{\{ 0, 1 \}}(\mathbb{Z}/n\mathbb{Z})$. In order to talk about sets of the torus $\mathbb{T}$, let $\mu^*$ denote the normalized Haar measure on $\mathbb{T}$ and let $\mu^*_{k, \ell}(\mathbb{T}) = \max \{ \mu^*(A)\ |\ A \subseteq \mathbb{T}, kA \cap \ell A = \emptyset \}$. Kravitz proved the following equality for maximal sum-free sets of $\mathbb{T}$:

\begin{theorem}[\cite{kravitz}, Theorem 1.3]
    For $k > \ell$, it holds that $\mu^*_{k, \ell}(\mathbb{T}) = \frac{1}{k + \ell}$.
\end{theorem}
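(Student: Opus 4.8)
The plan is to prove the two inequalities separately: the lower bound will come from an explicit open arc, and the upper bound from the continuous (circle) version of the Cauchy--Davenport theorem.

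For the lower bound $\mu^*_{k,\ell}(\TT)\ge\frac{1}{k+\ell}$, I would exhibit a single arc that does the job. Since $k>\ell$, pick $x\in\TT$ with $(k-\ell)x=\tfrac12$ and let $A$ be the open arc of length $\tfrac{1}{k+\ell}$ centered at $x$, so $\mu^*(A)=\tfrac{1}{k+\ell}$. The point of this choice is that the Minkowski sum of open arcs whose total length is less than $1$ is again an open arc, so for each $j<k+\ell$ the sumset $jA$ is the open arc of length $\tfrac{j}{k+\ell}$ centered at $jx$; in particular $kA$ and $\ell A$ are open arcs of lengths $\tfrac{k}{k+\ell}$ and $\tfrac{\ell}{k+\ell}$ whose centers $kx$ and $\ell x$ differ by exactly $\tfrac12$. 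A short check of the four endpoints then shows $kA$ and $\ell A$ are complementary arcs, hence disjoint, so $A$ is $(k,\ell)$-sum-free; this simultaneously shows the maximum is attained.

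For the upper bound $\mu^*_{k,\ell}(\TT)\le\frac{1}{k+\ell}$, the key input I would use is the circle analogue of the Cauchy--Davenport theorem (Macbeath's sum theorem for the torus, equivalently Kneser's theorem for a connected compact abelian group): for compact $S,T\subseteq\TT$ of positive measure, $\mu^*(S+T)\ge\min\{1,\mu^*(S)+\mu^*(T)\}$, which iterates to $\mu^*(jS)\ge\min\{1,\,j\,\mu^*(S)\}$ for all $j\ge1$. Given a $(k,\ell)$-sum-free set $A$ with $\mu^*(A)>0$, I would first use inner regularity to choose a compact $K\subseteq A$ with $\mu^*(K)$ as close as desired to $\mu^*(A)$, so that $kK$ and $\ell K$ are disjoint compact subsets of $kA$ and $\ell A$ and hence $\mu^*(kK)+\mu^*(\ell K)\le1$. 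If $k\mu^*(K)\ge1$ or $\ell\mu^*(K)\ge1$, then one of the two sumsets would have full measure, forcing the other to be null, which is impossible; so both minima in the iterated inequality are the linear terms, giving $(k+\ell)\mu^*(K)\le1$, and letting $K$ exhaust $A$ yields $(k+\ell)\mu^*(A)\le1$.

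I expect the only genuine obstacle to be the upper bound, and within it the main point is marshalling the correct form of the continuous Cauchy--Davenport inequality and handling measurability: the sumset of measurable sets need not be Borel, which is precisely why I would pass to compact subsets before invoking Macbeath's inequality and before adding up the measures of the two disjoint sumsets. With that black box in hand, the remaining argument is the short case analysis above, and the lower bound is routine once one guesses the right arc and its center.
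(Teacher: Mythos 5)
This statement is not proved in the paper at all: it is quoted directly from Kravitz \cite{kravitz}, so there is no internal proof to compare your argument against. On its own merits, your proof is correct and follows what is essentially the standard route (and the same spirit as Kravitz's original argument): the upper bound rests on the Raikov/Macbeath/Kneser inequality $\mu^*(S+T)\ge\min\{1,\mu^*(S)+\mu^*(T)\}$ for compact subsets of $\mathbb{T}$, iterated and combined with inner regularity to sidestep the possible non-measurability of sumsets of measurable sets, while the lower bound is the open arc of length $\frac{1}{k+\ell}$ centered at a point $x$ with $(k-\ell)x=\tfrac12$, whose $k$-fold and $\ell$-fold sums are complementary open arcs; both halves check out. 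Two small points you leave implicit but should state: the ``impossible'' in your case analysis holds because the allegedly null sumset contains a translate of $K$ and hence has measure at least $\mu^*(K)>0$, and the trivial case $\mu^*(A)=0$ should be dispatched separately before invoking the inequality. Finally, the black box you cite is legitimate exactly because $\mathbb{T}$ is connected: Kneser's theorem for locally compact abelian groups only fails the pure inequality when the sumset is periodic under a compact open subgroup, and $\mathbb{T}$ has no proper open subgroups, so the inequality holds unconditionally there.
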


Using the map $\phi : \mathcal{P}(\mathbb{Z}/n\mathbb{Z}) \rightarrow \mathcal{P}(\mathbb{T})$ we defined in the introduction, we can prove the following upper bound:

\begin{theorem} \label{thm:01easy}
    For $n, k, \ell \in \mathbb{N}$ with $k > \ell$, we have that $\mu_{k, \ell}^{\{ 0, 1 \}} \le \left\lfloor \frac{n}{k + \ell} \right\rfloor$.
\end{theorem}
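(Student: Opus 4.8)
The plan is to transport a $\{0,1\}$-$(k,\ell)$-sum-free set $A \subseteq \mathbb{Z}/n\mathbb{Z}$ to the torus via the map $\varphi$ from the introduction and appeal to Kravitz's equality $\mu^*_{k,\ell}(\mathbb{T}) = \frac{1}{k+\ell}$. Since $\varphi(A)$ is a disjoint union of $|A|$ open arcs of length $1/n$, we have $\mu^*(\varphi(A)) = |A|/n$, so it suffices to show that $\varphi(A)$ is genuinely $(k,\ell)$-sum-free in $\mathbb{T}$: Kravitz's theorem then gives $|A|/n \le \frac{1}{k+\ell}$, and the integrality of $|A|$ upgrades this to $|A| \le \lfloor n/(k+\ell) \rfloor$. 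Taking the maximum over all such $A$ yields the claim.

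The subtlety is that $\varphi$ does not exactly intertwine the Minkowski sum on $\mathbb{T}$ with the operation $*_{\{0,1\}}$ on $\mathbb{Z}/n\mathbb{Z}$: the identity $\varphi(X) + \varphi(Y) = \varphi(X + Y + \{0,1\})$ from the introduction holds only after discarding the finitely many ``seam'' points $\tfrac{x+y+1}{n}$, which lie in the left-hand side but not the right. Iterating, I would first record that for each $m \ge 1$ the sets $m\varphi(A)$ and $\varphi(m *_{\{0,1\}} A) = \varphi\big(mA + (m-1)\{0,1\}\big)$ differ only in a finite set of points; in particular this holds for $m = k$ and $m = \ell$.

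Now, because $A$ is $\{0,1\}$-$(k,\ell)$-sum-free, $(k *_{\{0,1\}} A) \cap (\ell *_{\{0,1\}} A) = \emptyset$, and since $\varphi$ carries disjoint subsets of $\mathbb{Z}/n\mathbb{Z}$ to disjoint unions of arcs, $\varphi(k *_{\{0,1\}} A) \cap \varphi(\ell *_{\{0,1\}} A) = \emptyset$. Combined with the previous step, $k\varphi(A) \cap \ell\varphi(A)$ is contained in a finite set, hence has Haar measure zero. The key final observation eliminates the discrepancy entirely: $\varphi(A)$ is open, and a sum of open sets in the topological group $\mathbb{T}$ is open, so $k\varphi(A)$ and $\ell\varphi(A)$ are open; their intersection is therefore an open subset of $\mathbb{T}$ of measure zero, hence empty. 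Thus $\varphi(A)$ is $(k,\ell)$-sum-free, and the argument above concludes. The main obstacle is precisely this bookkeeping with the seam points together with the realization that openness — rather than any perturbation or shrinking argument — is what makes the measure-zero overlap vanish; note also that this handles the degenerate range $n < k+\ell$ automatically, since there the conclusion forces $A = \emptyset$.
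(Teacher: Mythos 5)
Your proof is correct and follows essentially the same route as the paper: transport $A$ to the torus via $\varphi$, observe that $\varphi(A)$ is $(k,\ell)$-sum-free, apply Kravitz's theorem $\mu^*_{k,\ell}(\mathbb{T}) = \frac{1}{k+\ell}$, and use integrality of $|A|$ to obtain the floor. The only difference is that you carefully patch the seam-point discrepancy in the identity $\varphi(X)+\varphi(Y)=\varphi(X+Y+\{0,1\})$ via the openness argument, a detail the paper's proof passes over by asserting the identity (and hence the equivalence ``$A$ sum-free iff $\varphi(A)$ sum-free'') without qualification.
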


\begin{proof}
    Recall that $\phi$ is defined on sets $A \subseteq \mathbb{Z}/n\mathbb{Z}$ by $\phi(A) = \bigcup_{i \in A} \left( \frac{i}{n}, \frac{i + 1}{n} \right)$ and that $\phi( A +_{\{ 0, 1 \}} B ) = \phi(A) + \phi(B)$. Therefore, $A$ is $\{ 0, 1 \}$-$(k, \ell)$-sum-free iff $\phi(A)$ is sum-free. Then,
    \[
        \mu_{k, \ell}^{\{ 0, 1 \}}(\mathbb{Z}/n\mathbb{Z}) 
        \le n \cdot \mu^*_{k, \ell}(\mathbb{T}) 
        = \frac{n}{k + \ell}.
    \]
    Since $\mu_{k, \ell}^{\{ 0, 1 \}}(\mathbb{Z}/n\mathbb{Z})$ is an integer, we in fact have that 
    \[
        \mu_{k, \ell}^{\{ 0, 1 \}}(\mathbb{Z}/n\mathbb{Z})
        \le \left\lfloor \frac{n}{k + \ell} \right\rfloor.
    \]
\end{proof}

This bound is sharp: for instance, for $n = 10$, $k = 2$, and $\ell = 1$, we have that $\{ 4, 5, 6 \}$ is a size $\left\lfloor \frac{n}{k + \ell} \right\rfloor = 3$ $\{ 0, 1 \}$-$(k, \ell)$-sum-free set.

In general, we may prove an upper bound on $\mu_{k, \ell}^C(\mathbb{Z}/n\mathbb{Z})$ with a different approach, which will align with Theorem~\ref{thm:01easy} in the case $C = \{ 0, 1 \}$. The upper bound we give is based on the following result of Kneser.

\begin{theorem}[Kneser \cite{kneser}]
    Let $G$ be a finite abelian group. For nonempty $A, B \subseteq G$ and $H = \stab(A + B)$ the stabilizer of $A + B$, then 
    \[
        |A + B| \ge |A + H| + |B + H| - |H|.
    \]
\end{theorem}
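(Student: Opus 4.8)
This is the classical argument through the Dyson transform, and the plan has two parts: reduce to the case of trivial stabilizer, then induct on $\abs B$. For the reduction, let $H=\stab(A+B)$; since $A+B$ is a union of cosets of $H$, passing to the quotient $G/H$ replaces $A,B$ by their images, makes the stabilizer of the sumset trivial there, and — after dividing by $\abs H$ — turns the claim into $\abs{A+B}\ge\abs A+\abs B-1$. So it suffices to prove, in every finite abelian group, the following Cauchy--Davenport-type statement: if $\stab(A+B)=\{0\}$ then $\abs{A+B}\ge\abs A+\abs B-1$. I would prove this by induction on $\abs B$; by the same reduction, the general Kneser inequality for any pair follows from it applied to pairs whose second set has size $\le\abs B$, so the induction may freely invoke general Kneser on pairs with strictly smaller second set. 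The base case $\abs B=1$ is immediate, since then $A+B$ is a translate of $A$ and both sides equal $\abs A$.

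For the inductive step, introduce the $e$-transform: for $e\in G$ put $A_e=A\cup(B+e)$ and $B_e=B\cap(A-e)$. The facts to record are (i) $A_e+B_e\subseteq A+B$, because the cross terms $A+B_e$ and $(B+e)+B_e$ both lie in $A+B$ (using $B_e\subseteq B$ and $B_e\subseteq A-e$); (ii) $\abs{A_e}+\abs{B_e}=\abs A+\abs B$, by inclusion--exclusion for $A$ and $B+e$; and (iii) $B_e\ne\emptyset$ exactly when $e\in A-B$, and $B_e\subsetneq B$ exactly when $B+e\not\subseteq A$. Assuming $\stab(A+B)=\{0\}$ and $\abs B\ge2$, I would first find $e\in A-B$ with $\emptyset\ne B_e\subsetneq B$: otherwise $B+e\subseteq A$ for all $e\in A-B$, and taking $e=a-b_0$ with $b_0\in B$ fixed and $a$ ranging over $A$ forces $A+(B-b_0)\subseteq A$, whence, since $A$ is finite and $0\in B-b_0$, $B-b_0\subseteq\stab(A)\subseteq\stab(A+B)=\{0\}$, contradicting $\abs B\ge2$. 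For such an $e$ we have $\abs{B_e}<\abs B$, so general Kneser applies to $(A_e,B_e)$: with $K=\stab(A_e+B_e)$,
\[
    \abs{A_e+B_e}\ \ge\ \abs{A_e+K}+\abs{B_e+K}-\abs K .
\]

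It remains to deduce $\abs{A+B}\ge\abs A+\abs B-1$ from this, which is the step I expect to be the main obstacle. When $K=\{0\}$ it is immediate from (i) and (ii): $\abs{A+B}\ge\abs{A_e+B_e}\ge\abs{A_e}+\abs{B_e}-1=\abs A+\abs B-1$. The real difficulty is when the transform manufactures a \emph{nontrivial} $K$. Here the key point is that $\stab(A+B)=\{0\}$ whereas $A_e+B_e\subseteq A+B$ is $K$-periodic, so $A+B$ is strictly larger than $A_e+B_e$; more precisely, since $A_e+B_e$ is a union of $K$-cosets and $A+B$ is not $K$-periodic, $A+B$ must contain at least one entire $K$-coset disjoint from $A_e+B_e$. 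One then has to extract \emph{enough} such new cosets and combine their count with $\abs{A_e}\ge\abs A$, with $\abs{A_e}+\abs{B_e}=\abs A+\abs B$, and with the displayed bound for $(A_e,B_e)$, to recover $\abs{A+B}\ge\abs A+\abs B-1$. Making this counting argument tight — by choosing $e$ carefully, or by iterating the transform until the recovered part is large enough — is the technical heart of Kneser's theorem and the one place the proof needs a genuine idea rather than bookkeeping.
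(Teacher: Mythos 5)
The paper does not prove this statement at all: Kneser's theorem is invoked as a classical black box with a citation to \cite{kneser}, so your attempt can only be measured against the standard literature proof. Measured that way, your write-up is an accurate outline of the classical Dyson-transform strategy, but it is not a proof, and you say so yourself: the entire difficulty of Kneser's theorem sits in the case where $K=\stab(A_e+B_e)$ is nontrivial, and that case is left as ``one then has to extract enough such new cosets.'' Everything you do complete --- the reduction to trivial stabilizer by quotienting by $H$, properties (i)--(iii) of the $e$-transform, the existence of $e\in A-B$ with $\emptyset\ne B_e\subsetneq B$, and the easy subcase $K=\{0\}$ --- is correct but is the routine part; the missing counting argument is precisely where Kneser's idea lives, so the proposal has a genuine gap rather than a finishable sketch.

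Two further points about the part you did write. First, the one concrete claim you offer toward the hard case is wrong as stated: from the facts that $A_e+B_e$ is a union of $K$-cosets contained in $A+B$ and that $A+B$ is not $K$-periodic, you may conclude only that $A+B$ meets some $K$-coset \emph{partially} (and that coset is disjoint from $A_e+B_e$); you cannot conclude that $A+B$ \emph{contains an entire} $K$-coset disjoint from $A_e+B_e$. The standard completion passes to $G/K$, applies induction to the images there, and then bounds the deficiency $\abs{K}-\abs{(A+B)\cap(\text{partial coset})}$, usually after choosing $e$ to maximize $\abs{B_e}$ or by setting up a minimal counterexample with an induction quantity like $\abs{A+B}+\abs{B}$. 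Second, your induction scheme (induct on $\abs B$, ``freely invoke general Kneser on pairs with strictly smaller second set'') is shakier than it looks: the reduction to trivial stabilizer quotients by $H$, and in the nontrivial-$K$ case one quotients by $K$, and neither operation is guaranteed to strictly decrease the size of the second set in a way compatible with induction on $\abs B$ alone; this is exactly why the textbook proofs use a more carefully chosen induction quantity. Since the paper only needs the theorem as a cited tool, the honest course is to cite it as the paper does, or else to carry out the full argument including the nontrivial-stabilizer case.
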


In order to apply this theorem, we need the following easy lemma.

\begin{lemma} \label{lemma:substab}
    For sets $A$ and $B$, $\stab(A) \subseteq \stab(A + B)$ as a subgroup inclusion.
\end{lemma}

\begin{proof}
    We have that 
    \[
        \stab(A) + (A + B)
        = (\stab(A) + A) + B 
        = A + B.
    \]
\end{proof}

Recall, by definition, that $A +_C B = A + B + C$, so this lemma also gives that $\stab(A) \subseteq \stab(A +_C B)$. 

Now, we can lower bound $|A +_C B|$.

\begin{lemma} \label{lemma:kneser_bound}
    When $C = \{ 0, 1, \dots, c-1 \}$ for $c \ge 2$, we have that
    \[ 
        |A +_C B| \ge \min \left\{ n, |A| + |B| + (c-2) \right\}.
    \]
\end{lemma}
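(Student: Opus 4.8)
The strategy is to apply Kneser's theorem directly to the Minkowski sum $(A+B)+C$ and then analyze the stabilizer. Write $S = A + B$ and let $H = \stab(S + C)$, a subgroup of $\ZZ/n\ZZ$, hence $H = (n/m)\ZZ/n\ZZ$ for some divisor $m$ of $n$ (so $|H| = m$). Kneser's theorem applied to the pair $(S, C)$ gives
\[
    |A +_C B| = |S + C| \ge |S + H| + |C + H| - |H|.
\]
I will bound the two terms on the right separately. Since $\stab(A) \subseteq \stab(A+B) = \stab(S) \subseteq \stab(S+C) = H$ by Lemma~\ref{lemma:substab}, we have $A + H = A$ up to nothing useful directly; instead the key point is that $S + H$ is a union of $H$-cosets containing $S$, and a standard consequence of Kneser (or direct counting) gives $|S + H| \ge |A| + |B| - |H|$ when $H$ is the relevant stabilizer — but more simply, since $S \supseteq$ (a coset-saturated set), I will use $|S + H| \ge |S| \ge |A| + |B| - 1$ only as a fallback, and instead argue $|S+H| \ge |A+H| + |B+H| - |H| \ge |A| + |B| - |H|$ using that $H$ stabilizes $S+C$ hence (after adding $C$) behaves well. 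The cleaner route: apply Kneser to $A+B$ itself is circular, so instead note $|S + H| \geq |A| + |B| - |H|$ follows because $|A+H| \ge |A|$, $|B + H| \ge |B|$, and $S + H \supseteq (A+H)+(B+H)$ which has size $\ge |A+H| + |B+H| - |H|$ by Kneser again (its stabilizer contains $H$, and $H$-saturated, so equality-type bound applies cleanly).

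The second term is the heart of the matter: I must show $|C + H| \ge \min\{|H| + (c-1),\, n\}$, or more precisely that $|C+H| - |H| \ge c - 1$ unless $C + H$ is everything. Here $C = \{0, 1, \dots, c-1\}$ is an interval of length $c$. If $H = \{0\}$ (i.e.\ $m = 1$), then $|C + H| = |C| = c$, giving $|C+H| - |H| = c - 1$ exactly, and combined with the first bound yields $|A+B+C| \ge |A| + |B| + (c-1) - 1 + \dots$; I need to be careful that the $-|H|$ terms telescope correctly to leave $+(c-2)$. If $H \ne \{0\}$, then $H = (n/m)\ZZ/n\ZZ$ with $m \ge 2$, and $C + H$ is a union of arithmetic-progression-translates of $H$; because $C$ is an interval of $c$ consecutive integers, $C + H$ contains $H$ together with the $c-1$ translates $H + 1, \dots, H + (c-1)$, and these contribute at least $\min\{c-1, n - |H|\}$ new elements beyond $H$ (each shift by $1$ adds at least one new residue until the whole group is covered). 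So $|C + H| \ge \min\{n, |H| + (c-1)\}$ in all cases.

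Putting it together: when $S + C \ne \ZZ/n\ZZ$,
\[
    |A +_C B| \ge \bigl(|A| + |B| - |H|\bigr) + \bigl(|H| + (c-1)\bigr) - |H| = |A| + |B| + (c-1) - |H|,
\]
which is not quite the claimed bound unless $|H| = 1$. So the real argument must avoid the loss of $|H|$ in the first term. The fix is to apply Kneser to $(S, C)$ but recognize $S = A + B$ where $\stab(S)$ may be strictly smaller than $H$; I should instead let $H = \stab(A+B+C)$ and use that $A + B + C = (A + H) + (B + H) + C$ after noting $H$ stabilizes the whole sum, then apply Kneser to the pair $(A + B, C)$ getting $|A+B+C| \ge |(A+B) + H| + |C + H| - |H|$, and separately $|(A+B)+H| \ge |A+H| + |B+H| - |H|$... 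I see the $|H|$'s still do not cancel favorably in general. **The main obstacle**, then, is handling $|H| \ge 2$: I expect the resolution is that when $H$ is nontrivial, $C + H$ is \emph{much} larger — specifically $|C+H| \ge |H| \cdot \lceil c / (\text{gap}) \rceil$ type bounds, or one passes to the quotient $\ZZ/n\ZZ \to \ZZ/m\ZZ$ where $C$ maps to an interval and one applies the trivial-stabilizer case there, multiplying back up by $n/m = |H|$. Concretely: in $G/H \cong \ZZ/m\ZZ$, the images $\bar A, \bar B$ satisfy $|\bar A + \bar B + \bar C| \ge |\bar A| + |\bar B| + (\bar c - 2)$ by the $m=1$ base case (stabilizer now trivial in the quotient), where $\bar C$ is the image of the interval; since preimages of nonzero sets under a quotient map scale by $|H|$, this recovers $|A + B + C| \ge |H| \cdot (|\bar A| + |\bar B| + \bar c - 2) \ge |A| + |B| + (c - 2)$ provided $|\bar A| \ge |A|/|H|$ etc.\ and $\bar c \ge c/|H|$ — and these hold because $A + B + C$ is $H$-saturated so $|A+B+C| = |H| \cdot |\overline{A+B+C}|$. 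I will carry out exactly this quotient reduction, treating the trivial-stabilizer case as the base case where Kneser gives $|S + C| \ge |S| + |C| - 1 \ge (|A| + |B| - 1) + c - 1 = |A| + |B| + c - 2$, the $-1$'s coming from the two trivial-stabilizer Cauchy–Davenport-type applications, and capping at $n$ when the sum fills the group.
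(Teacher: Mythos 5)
Your final plan (quotient by $H=\stab(A+B+C)$, apply the trivial-stabilizer case in the quotient, scale back up by $|H|$) is in substance the paper's argument in different packaging, and most of its ingredients are sound: $A+B+C$ is a union of $H$-cosets, so $|A+B+C|=|H|\cdot|\bar A+\bar B+\bar C|$; the stabilizer of $\bar A+\bar B+\bar C$ in $\ZZ/m\ZZ$ is trivial; and Lemma~\ref{lemma:substab} then forces $\stab(\bar A+\bar B)$ to be trivial as well, so two applications of Kneser give $|\bar A+\bar B+\bar C|\ge|\bar A|+|\bar B|+|\bar C|-2$. The gap is in the final accounting. You claim the conclusion follows ``provided $|\bar A|\ge|A|/|H|$ etc.\ and $\bar c\ge c/|H|$,'' but those provisos only yield
\[
|H|\bigl(|\bar A|+|\bar B|+\bar c-2\bigr)\ \ge\ |A|+|B|+c-2|H|,
\]
which falls short of $|A|+|B|+(c-2)$ exactly when $|H|\ge 2$ --- i.e.\ in precisely the case the lemma is about. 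What you need, and never establish in the final plan, is the dichotomy where the interval structure of $C$ actually enters: either the index $m=[\ZZ/n\ZZ:H]$ exceeds $c$, in which case the residues $0,1,\dots,c-1$ stay distinct in $\ZZ/m\ZZ$, so $\bar c=c$ exactly and $|H|(\bar c-2)\ge c-2$ holds; or $m\le c$, in which case $\bar C=\ZZ/m\ZZ$, hence $\bar A+\bar B+\bar C=\ZZ/m\ZZ$ and, by $H$-saturation, $A+_CB=\ZZ/n\ZZ$, giving the $n$ branch of the minimum (indeed, by your own observation that the quotient stabilizer is trivial, this second case forces $m=1$). With that dichotomy inserted your quotient argument closes, and it is then essentially equivalent to the paper's direct computation, which instead notes $|C+K|=c|K|$ when $[\ZZ/n\ZZ:K]>c$ and uses $|\stab(A+B)|\le|K|$ to absorb the loss from the second Kneser application.

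A secondary remark on one of your abandoned detours: the justification ``$|(A+H)+(B+H)|\ge|A+H|+|B+H|-|H|$ by Kneser, since its stabilizer contains $H$'' is backwards --- Kneser subtracts the stabilizer of the sum, which contains $H$ and could be larger, so a bigger stabilizer weakens the bound rather than strengthening it. The statement is in fact true here, but for a different reason: $\stab(A+B+H)\subseteq\stab(A+B+C)=H$ by Lemma~\ref{lemma:substab}, so that stabilizer equals $H$ exactly. Since you discard this route, it does not affect the main argument, but the same care about which stabilizer Kneser subtracts is what your final step is missing.
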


\begin{proof}
    Let $K = \stab(A +_C B)$. Then, by Kneser's result, we have 
    \begin{align*}
        |A +_C B| 
        = |A + B + C| 
        &\ge |A + B + K| + |\{ 0, \dots, c-1 \} + K| - |K| \\
        &\ge |A + B| + |\{ 0, \dots, c-1 \} + K| - |K|. \numberthis \label{eqn:kneser-1}
    \end{align*}
    If $K = \mathbb{Z}/n\mathbb{Z}$, then this means that $A +_C B = \mathbb{Z}/n\mathbb{Z}$ since every element of $\mathbb{Z}/n\mathbb{Z}$ stabilizes $A +_C B$. Then, $|A +_C B| = n \ge \min \{ n, |A| + |B| + c - 2 \}$. So, assume $K \not= \mathbb{Z}/n\mathbb{Z}$. Note that if a subgroup $H$ stabilizes a set $X$, we must have that for any $x \in X$, $x + H \subseteq X$, so $X$ is a union of cosets of $H$. Then, if $[\mathbb{Z}/n\mathbb{Z} : K] \le c$, we have that $A +_C B$ is a union of cosets of $K$. However, if $a \in A$ and $b \in B$, then $\{ a + b, a + b - 1, \dots, a + b + c - 1 \} \subseteq A +_C B$, so at least one element of each coset of $K$ is in $A +_C B$. This implies that $A +_C B = \mathbb{Z}/n\mathbb{Z}$, which we've assumed is not true.
    
    Now, if $[\mathbb{Z}/n\mathbb{Z} : K] > c$, we have that $| \{ 0, \dots, c-1 \} + K| = c|K|$, so Equation~\ref{eqn:kneser-1} can be rewritten as
    \[
        |A +_C B| \ge |A + B| + (c-1)|K|.
    \]
    Now, let $H = \stab(A + B)$, so the above equation implies that
    \begin{align*}
        |A +_C B| 
        &\ge |A + H| + |B + H| + (c-1)|K| - |H| \\
        &\ge |A| + |B| + (c-1)|K| - |H|.
    \end{align*}
    By Lemma~\ref{lemma:substab}, $H$ is a subgroup of $K$. In particular, $|H| \le |K|$. Then, we can write
    \[
        |A +_C B| 
        \ge |A| + |B| + (c - 2)|H|
        \ge |A| + |B| + (c - 2),
    \]
    as desired.
\end{proof}

With this result, we can show an upper bound on $\mu_{k, \ell}^C(\mathbb{Z}/n\mathbb{Z})$.

\begin{theorem}
    We have that $\mu_{k, \ell}^C(\mathbb{Z}/n\mathbb{Z}) \le \left\lfloor \frac{n + 2(c-2))}{k + \ell} \right\rfloor - (c-2)$.
\end{theorem}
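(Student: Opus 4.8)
The plan is to iterate Lemma~\ref{lemma:kneser_bound}: a single application bounds $|A +_C A|$ from below, and repeating it $k-1$ times (respectively $\ell-1$ times) will bound $|k *_C A|$ (respectively $|\ell *_C A|$) from below in terms of $|A|$; since these two sets are disjoint, their sizes sum to at most $n$, which pins down $|A|$. So let $A \subseteq \mathbb{Z}/n\mathbb{Z}$ be a $C$-$(k,\ell)$-sum-free set, which we may assume is nonempty, and set $m = |A|$. The key structural fact is the identity
\[
    (j+1) *_C A = (j *_C A) +_C A \qquad (j \ge 1),
\]
which is immediate from the definitions, since $(j *_C A) +_C A = (jA + (j-1)C) + A + C = (j+1)A + jC$.

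Using this identity together with Lemma~\ref{lemma:kneser_bound} applied to the (nonempty) pair $j *_C A$ and $A$, I would prove by induction on $j \ge 1$ that
\[
    |j *_C A| \;\ge\; \min\bigl\{\, n,\ jm + (j-1)(c-2) \,\bigr\}.
\]
For $j = 1$ this reads $|A| \ge \min\{n, m\}$, which holds since $|A| = m \le n$. For the inductive step, Lemma~\ref{lemma:kneser_bound} gives $|(j+1) *_C A| \ge \min\{n,\ |j *_C A| + m + (c-2)\}$; substituting the inductive bound and using the elementary identity $\min\{n,\ \min\{n, X\} + Y\} = \min\{n,\ X + Y\}$, valid for $Y \ge 0$ (here $Y = m + (c-2) \ge 0$), collapses the right-hand side to $\min\{n,\ (j+1)m + j(c-2)\}$, completing the induction.

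Now apply this with $j = k$ and $j = \ell$. Because $A$ is $C$-$(k,\ell)$-sum-free, the sets $k *_C A$ and $\ell *_C A$ are disjoint subsets of $\mathbb{Z}/n\mathbb{Z}$, so $|k *_C A| + |\ell *_C A| \le n$. Both sets are nonempty, so neither of the two minima can equal $n$ --- otherwise the left-hand side would be at least $n + 1$ --- and hence
\[
    \bigl(km + (k-1)(c-2)\bigr) + \bigl(\ell m + (\ell-1)(c-2)\bigr) \le n,
\]
i.e.\ $(k+\ell)\bigl(m + (c-2)\bigr) \le n + 2(c-2)$. Since $m + (c-2)$ is an integer, this gives $m + (c-2) \le \floor{\tfrac{n + 2(c-2)}{k+\ell}}$, so $m \le \floor{\tfrac{n+2(c-2)}{k+\ell}} - (c-2)$; maximizing over $A$ yields the theorem (and, as a sanity check, for $C = \{0,1\}$ this recovers the bound of Theorem~\ref{thm:01easy}). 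The argument is largely mechanical once the iteration idea is in hand; the only points requiring care are verifying the identity $(j+1)*_C A = (j *_C A) +_C A$ so that Lemma~\ref{lemma:kneser_bound} may be invoked at each stage, and pushing the truncating minima through the recursion so that the gain $+(c-2)$ is preserved at every step rather than being swallowed. I do not anticipate a genuine obstacle beyond this bookkeeping.
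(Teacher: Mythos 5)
Your proposal is correct and follows essentially the same route as the paper: iterate Lemma~\ref{lemma:kneser_bound} to get $|j *_C A| \ge \min\{n,\ j|A| + (j-1)(c-2)\}$, use nonemptiness and disjointness of $k *_C A$ and $\ell *_C A$ to discard the truncation at $n$, and rearrange with integrality to obtain the floor. The extra bookkeeping you supply (the identity $(j+1)*_C A = (j*_C A) +_C A$ and pushing the minima through the induction) is exactly what the paper's phrase ``iteratively applying'' leaves implicit.
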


\begin{proof}
    Take $A$ to be a $C$-$(k, \ell)$-sum-free subset of $\mathbb{Z}/n\mathbb{Z}$. By iteratively applying Lemma~\ref{lemma:kneser_bound}, we find that
    \begin{align*} 
        |k *_C A| &\ge \min \{ n, k|A| + (k-1)(c-2) \} \\
        |\ell *_C A| &\ge \min \{ n, \ell|A| + (\ell-1)(c-2) \}.
    \end{align*} 
    Since both $k *_C A$ and $\ell *_C A$ are nonempty, we must have that $|k *_C A|, |\ell *_C A| < n$, so 
    \begin{align*} 
        |k *_C A| &\ge k|A| + (k-1)(c-2) \\
        |\ell *_C A| &\ge \ell|A| + (\ell-1)(c-2).
    \end{align*}
    Then since $k *_C A$ and $\ell *_C A$ are disjoint, 
    \begin{align*}
        n 
        \ge |k *_C A| + |\ell *_C A|
        \ge (k + \ell)|A| + (k + \ell - 2)(c - 2).
    \end{align*}
    Rearranging gives 
    \[
        |A| \le \frac{n + 2(c-2)}{k + \ell} - (c - 2).
    \]
    Since $|A|$ must be an integer, we must have
    \[
        |A| \le \left\lfloor \frac{n + 2(c-2)}{k + \ell} \right\rfloor - (c - 2).
    \]
\end{proof}

We denote this upper bound by $\chi(c, k, \ell) = \left\lfloor \frac{n + 2(c-2)}{k + \ell} \right\rfloor - (c-2)$. We will see in the lower bound that this upper bound is achieved or almost achieved by an interval. The analysis of the largest length of a $C$-$(k, \ell)$-sum-free interval will closely follow the methods used by Bajnok and Matzke in calculating the largest $(k, \ell)$-sum-free sets in \cite{bajnok-max}.

\begin{theorem}
    Let $\delta = \gcd(n, k - \ell)$ and $r$ denote the remainder of $-k\cdot \chi(c, k, \ell) - (k-1)(c-2)$ modulo $\delta$. Then,
    \[
        \mu_{k, \ell}^C(\mathbb{Z}/n\mathbb{Z}) \ge \left\lfloor \frac{n + 2(c-2) - r}{k + \ell} \right\rfloor - (c-2).
    \]
\end{theorem}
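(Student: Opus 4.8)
The plan is to realize the lower bound by a single explicit arc. Set
\[
    m = \left\lfloor \frac{n + 2(c-2) - r}{k+\ell} \right\rfloor - (c-2);
\]
if $m \le 0$ the bound is vacuous (the empty set is $C$-$(k,\ell)$-sum-free), so assume $m \ge 1$ and take $A = \{g, g+1, \dots, g+m-1\} \subseteq \mathbb{Z}/n\mathbb{Z}$ for a base point $g$ to be chosen. Since $C = \{0,1,\dots,c-1\}$, a direct computation (the equality case of the iteration used for the preceding upper bound) shows that $k *_C A = kA + (k-1)C$ is the arc of length $L_k := km + (k-1)(c-2)$ beginning at $kg$, and $\ell *_C A$ is the arc of length $L_\ell := \ell m + (\ell-1)(c-2)$ beginning at $\ell g$. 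Because $m \le \chi(c,k,\ell)$ one has $L_k + L_\ell \le n$, and $L_\ell \ge 1$, so both are proper arcs of $\mathbb{Z}/n\mathbb{Z}$.

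Next I would convert the sum-free condition into a covering condition. Two arcs of lengths $p$ and $q$ starting at $x$ and $y$ are disjoint in $\mathbb{Z}/n\mathbb{Z}$ precisely when $(y-x) \bmod n$ lies in $\{p, p+1, \dots, n-q\}$ (when $p+q>n$ this set is empty, consistent with the arcs always overlapping). Applied here, $k *_C A \cap \ell *_C A = \emptyset$ iff $(\ell - k)g \bmod n \in \{L_k, L_k+1, \dots, n - L_\ell\}$. As $g$ ranges over $\mathbb{Z}/n\mathbb{Z}$ the residue $(\ell-k)g \bmod n$ ranges over exactly the multiples of $\delta = \gcd(n,k-\ell)$, so a suitable $g$ exists if and only if the interval $\{L_k, \dots, n - L_\ell\}$ contains a multiple of $\delta$. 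Writing $w := (-L_k) \bmod \delta \in \{0,\dots,\delta-1\}$ for the gap from $L_k$ up to the next multiple of $\delta$, this is equivalent to $L_k + w \le n - L_\ell$, i.e.\ to
\[
    (k+\ell)(m + c - 2) + w \le n + 2(c-2).
\]

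It remains to verify this inequality for our value of $m$. By definition $m + c - 2 = \left\lfloor \frac{n + 2(c-2) - r}{k+\ell}\right\rfloor$, so $(k+\ell)(m+c-2) \le n + 2(c-2) - r$; and the quantity $r$ in the statement is exactly $\big(-k\,\chi(c,k,\ell) - (k-1)(c-2)\big) \bmod \delta$, which is $w$ evaluated at the maximal length $m' = \chi(c,k,\ell)$. Since $0 \le r < \delta \le k-\ell < k+\ell$, the floor defining $m$ equals either $\left\lfloor\frac{n+2(c-2)}{k+\ell}\right\rfloor$ or one less. In the first case $m = \chi(c,k,\ell)$, hence $w = r$, and the inequality becomes $(k+\ell)(m+c-2) + r \le n + 2(c-2)$, which holds because this case occurs exactly when $(n + 2(c-2)) \bmod (k+\ell) \ge r$. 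In the second case $(k+\ell)(m+c-2) \le n + 2(c-2) - (k+\ell)$, and since $w \le \delta - 1 < k+\ell$ the excess $w$ is absorbed by the extra slack $k+\ell$. Either way the displayed inequality holds, so a valid $g$ exists and $A$ is a $C$-$(k,\ell)$-sum-free set of size $m$, giving the claimed bound.

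The step I expect to be the crux is this last verification: isolating the covering inequality with the correct coefficient on $w$ and the correct additive shift $2(c-2)$, identifying $r$ with $w$ evaluated at $m = \chi(c,k,\ell)$, and handling the case where shrinking from $\chi(c,k,\ell)$ to $m$ changes the value of $w$ — the point being that any such change is by less than $\delta < k+\ell$, which is precisely the slack produced by the floor. The remaining ingredients — the explicit form of $k*_C A$ and $\ell*_C A$ for an arc, the disjointness criterion for two arcs, and the fact that $(\ell-k)\mathbb{Z}/n\mathbb{Z}$ is the subgroup of multiples of $\delta$ — are routine.
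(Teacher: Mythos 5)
Your proposal is correct and follows essentially the same route as the paper: you realize the bound by an interval (arc), reduce disjointness of $k *_C A$ and $\ell *_C A$ to the existence of a multiple of $\delta = \gcd(n, k-\ell)$ in an explicit integer interval, and verify this for $m = \left\lfloor \frac{n + 2(c-2) - r}{k+\ell}\right\rfloor - (c-2)$ by comparing with $\chi(c,k,\ell)$, exactly as in the paper's proof. The only differences are presentational (arc-disjointness and the subgroup $(\ell-k)\,\mathbb{Z}/n\mathbb{Z}$ versus the paper's lift to integers via $\frac{k-\ell}{\delta}a - \frac{n}{\delta}b$), and your two-case check of the covering inequality is sound.
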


\begin{proof}
    We have that an interval $A = [a, \dots, a + m - 1]$ of length $m$ satisfies
    \begin{align*}
        k *_C A &= \{ ka, \dots, ka + km + (c-2)k - (c-1) \} \\
        \ell *_C A &= \{ \ell a, \dots, \ell a + \ell m + (c-2)\ell - (c-1) \},
    \end{align*}
    so 
    \[ 
        k *_C A - \ell *_C A = \{ (k-\ell)a - \ell m - (c-2)\ell + (c-1), \dots, (k-\ell)a + km + (c-2)k - (c-1) \}.
    \]
    We have that $A$ is $C$-$(k, \ell)$-sum-free iff $k *_C A - \ell *_C A$ does not contain $0$, so $A$ is $C$-$(k, \ell)$-sum-free iff there exists some $b \in \mathbb{Z}$ for which
    \begin{align*}
        bn + 1 &\le (k - \ell)a - \ell m - (c-2)\ell + (c-1) \\
        (b + 1)n - 1 &\ge (k - \ell)a + km + (c-2)k - (c-1),
    \end{align*}
    which can be rearranged to give
    \begin{align*}
        \ell m + (\ell-1)(c-2)
        &\le (k - \ell)a - bn 
        \le n - km - (k-1)(c-2) \\
        \Longleftrightarrow 
        \frac{\ell m + (\ell-1)(c-2)}{\delta}
        &\le \frac{k - \ell}{\delta} \cdot a - \frac{n}{\delta} \cdot b 
        \le \frac{n - km - (k-1)(c-2)}{\delta}.
    \end{align*}
    Since $\gcd \left( \frac{k - \ell}{\delta}, \frac{n}{\delta} \right) = 1$, any integer can be expressed as $\frac{k - \ell}{\delta} \cdot a - \frac{n}{\delta} \cdot b$ for some choice of $a$ and $b$. Thus, it suffices to show that there exists some integer $z$ for which 
    \[
        \frac{\ell m + (\ell-1)(c-2)}{\delta}
        \le z
        \le \frac{n - km - (k-1)(c-2)}{\delta},
    \]
    or equivalently,
    \[
        \frac{\ell m + (\ell-1)(c-2)}{\delta} 
        \le \left\lfloor \frac{n - km - (k-1)(c-2)}{\delta} \right\rfloor. \numberthis \label{eqn:interval_ineq}
    \]
    Since $\lfloor \frac{x}{\delta} \rfloor \ge \lfloor \frac{x - \delta + 1}{\delta} \rfloor$, we have that any $m$ that satisfies 
    \[
        \frac{\ell m + (\ell-1)(c-2)}{\delta} 
        \le \frac{n - km - (k-1)(c-2) - \delta + 1}{\delta}
    \] 
    must also satisfy \ref{eqn:interval_ineq}. But the above can be rewritten as 
    \[
        m \le \frac{n + 2c - \delta - 3}{k + \ell} - (c - 2),
    \]
    so there is an interval of size $\left\lfloor \frac{n + 2c - \delta - 3}{k + \ell} \right\rfloor - (c - 2) = \left\lfloor \frac{n + 2(c-2) - (\delta-1)}{k + \ell} \right\rfloor - (c - 2)$ that is $C$-$(k, \ell)$-sum-free. Since $\delta = \gcd(n, k - \ell) < k + \ell$, if we let $\gamma(n, k, \ell, c)$ denote the length of the longest $C$-$(k, \ell)$-sum-free interval, we have that $\gamma(n, k, \ell, c) \in \{ \chi(n, k, \ell, c), \chi(n, k, \ell, c) - 1 \}$.
    
    Since $r \equiv -k \cdot \chi(n, k, \ell, c) - (k-1)(c-2) \Mod{\delta}$ and $n \equiv 0 \Mod{\delta}$, we have that $\gamma(n, k, \ell, c) = \chi(n, k, \ell, c)$ iff
    \begin{align*}
        \ell\cdot \chi(n, k, \ell, c) + (\ell-1)(c-2) 
        &\le n - k\cdot \chi(n, k, \ell, c) - (k-1)(c-2) - r \\
        \Longleftrightarrow 
        \chi(n, k, \ell, c) 
        &\le \frac{n + 2(c-2) - r}{k + \ell} - (c-2).
    \end{align*}
    Since $\chi(n, k, \ell, c)$ is an integer, $\gamma(n, k, \ell, c) = \chi(n, k, \ell, c)$ exactly when 
    \[
        \chi(n, k, \ell, c)
        \le \left\lfloor \frac{n + 2(c-2) - r}{k + \ell} \right\rfloor - (c-2). \numberthis \label{eqn:bound_truth}
    \]
    Note that $\left\lfloor \frac{n + 2(c-2) - r}{k + \ell} \right\rfloor - (c-2)$ takes the value $\chi(n, k, \ell, c)$ exactly when Equation~\ref{eqn:bound_truth} holds and otherwise takes the value $\chi(n, k, \ell, c) - 1$, thus $\gamma(n, k, \ell, c) = \left\lfloor \frac{n + 2(c-2) - r}{k + \ell} \right\rfloor - (c-2)$ is the length of the longest $C$-$(k, \ell)$-sum-free interval in $\mathbb{Z}/n\mathbb{Z}$. In particular, this implies that 
    \[
        \mu_{k, \ell}^C(\mathbb{Z}/n\mathbb{Z})
        \ge \left\lfloor \frac{n + 2(c-2) - r}{k + \ell} \right\rfloor - (c-2).
    \]
\end{proof}

Combining the two bounds, and noting that $r$ is the remainder of $-k \cdot \chi(n, k, \ell, c) - (k-1)(c-2) = -k \cdot \left\lfloor \frac{n + 2(c-2)}{k + \ell} \right\rfloor + (c - 2)$ modulo $\delta$, we have proven Theorem~\ref{thm:01c-main}.

\section{$C$ of size $2$}

We now restrict our attention to $C$ of size $2$. First, by shifting, we can write $C$ as $\{ 0, s \}$. When $s$ is relatively prime to $n$, we can multiply $C$ by $s^{-1}$ so that $C = \{ 0, 1 \}$, which we have examined in the previous section. Therefore, we now consider sets $C = \{ 0, s \}$ for which $d = \gcd(s, n) \not= 1$.

We will prove the upper and lower bounds of Theorem~\ref{thm:0s-main} separately. Recall that $\gamma(n, k, \ell, s + 1) = \left\lfloor \frac{n + 2(s-1) - r}{k + \ell} \right\rfloor - (s - 1)$, so our lower bound will follow immediately from the following theorem:

\begin{theorem}
    For any $k > \ell$, 
    \[
        \mu_{k, \ell}^{\{ 0, s \}}(\mathbb{Z}/n\mathbb{Z}) 
        \ge \max \left\{ 
            \max_{e | d} \left\{ \mu_{k, \ell}(\mathbb{Z}/e\mathbb{Z}) \cdot \frac{n}{e} \right\},\ 
            \gamma(n, k, \ell, s + 1)
        \right\}.
    \]
\end{theorem}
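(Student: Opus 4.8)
The plan is to exhibit, for each of the two quantities appearing in the maximum, a $\{0,s\}$-$(k,\ell)$-sum-free set of that size. Since $\mu_{k,\ell}^{\{0,s\}}(\mathbb{Z}/n\mathbb{Z})$ is a maximum over all valid sets, producing two constructions suffices.

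First I would handle the term $\gamma(n,k,\ell,s+1)$. The key observation is that $C = \{0,s\}$ sits inside the arithmetic progression $\{0,1,\dots,s\}$ of length $s+1$, so $k *_{\{0,s\}} A \subseteq k *_{\{0,\dots,s\}} A$, and likewise for $\ell$. Hence any $\{0,1,\dots,s\}$-$(k,\ell)$-sum-free set is automatically $\{0,s\}$-$(k,\ell)$-sum-free: if the larger sumsets are disjoint, so are the smaller ones. The longest $\{0,1,\dots,s\}$-$(k,\ell)$-sum-free interval has length $\gamma(n,k,\ell,s+1)$ by the interval computation in Section 3 (applied with $c = s+1$), so this gives a $\{0,s\}$-$(k,\ell)$-sum-free set of size $\gamma(n,k,\ell,s+1)$.

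Next I would handle the term $\max_{e \mid d}\{\mu_{k,\ell}(\mathbb{Z}/e\mathbb{Z})\cdot \tfrac{n}{e}\}$. Fix a divisor $e$ of $d = \gcd(s,n)$; then $e \mid n$ and $e \mid s$. The idea is to take a $(k,\ell)$-sum-free set $S \subseteq \mathbb{Z}/e\mathbb{Z}$ of maximal size $\mu_{k,\ell}(\mathbb{Z}/e\mathbb{Z})$ and pull it back along the quotient map $\pi:\mathbb{Z}/n\mathbb{Z}\to\mathbb{Z}/e\mathbb{Z}$, obtaining $A = \pi^{-1}(S)$, which has size $\mu_{k,\ell}(\mathbb{Z}/e\mathbb{Z})\cdot\frac{n}{e}$. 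Because $e \mid s$, the element $s$ maps to $0$ in $\mathbb{Z}/e\mathbb{Z}$, so adding $s$ does not change the image under $\pi$; concretely $A + \{0,s\} = A$ since $A$ is a union of cosets of $\ker\pi \cong (e/n)\mathbb{Z}/n\mathbb{Z}$ and $s \in \ker\pi$... wait, more carefully, $s$ lies in the subgroup $\frac{n}{e}\cdot$ — one should just verify $\pi(A+\{0,s\}) = \pi(A) = S$ and that $A+\{0,s\}$, being $\pi$-saturated, equals $\pi^{-1}(S)$; since $s\in\ker\pi$ indeed $A+\{0,s\}=A$. Consequently $k *_{\{0,s\}} A = kA + (k-1)\{0,s\} = kA$ (as $A$ is $\ker\pi$-invariant and $(k-1)s\in\ker\pi$), which is precisely $\pi^{-1}(kS)$, and similarly $\ell *_{\{0,s\}} A = \pi^{-1}(\ell S)$. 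Since $kS \cap \ell S = \emptyset$ in $\mathbb{Z}/e\mathbb{Z}$ and $\pi$ is a well-defined map, the preimages are disjoint, so $A$ is $\{0,s\}$-$(k,\ell)$-sum-free of the claimed size.

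Taking the larger of the two constructions gives the stated lower bound. I do not anticipate a serious obstacle; the only point requiring care is the bookkeeping that $A = \pi^{-1}(S)$ is genuinely invariant under translation by $\ker\pi$ and that $(k-1)\{0,s\}\subseteq\ker\pi$, so that the noise set $\{0,s\}$ is absorbed — this is where the hypothesis $e \mid d$ (equivalently $e \mid \gcd(s,n)$) is used. The inclusion argument for $\gamma$ is immediate once one notes $\{0,s\}\subseteq\{0,1,\dots,s\}$ and that enlarging $C$ only enlarges the relevant sumsets, making disjointness harder, not easier, to maintain.
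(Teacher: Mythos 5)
Your proposal is correct and follows essentially the same route as the paper: the term $\gamma(n,k,\ell,s+1)$ comes from the containment $\{0,s\}\subseteq\{0,1,\dots,s\}$ together with the interval construction of Section 3, and the term $\max_{e\mid d}\{\mu_{k,\ell}(\mathbb{Z}/e\mathbb{Z})\cdot\frac{n}{e}\}$ comes from pulling back a maximal $(k,\ell)$-sum-free set along the projection $\mathbb{Z}/n\mathbb{Z}\to\mathbb{Z}/e\mathbb{Z}$, using $e\mid\gcd(s,n)$ to absorb the noise set. The only blemish is the garbled description of $\ker\pi$ (it is the subgroup $\langle e\rangle$ of size $n/e$), which you correct in substance by observing $s\in\ker\pi$.
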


\begin{proof}
    For any $e | d$, define $\psi_{n, e}$ to be the canonical projection from $\mathbb{Z}/n\mathbb{Z}$ onto $\mathbb{Z}/e\mathbb{Z}$. That is, $\psi_{n, e}(a)$ is equal to the remainder of $a$ modulo $e$. Suppose we have a maximal $(k, \ell)$-sum-free set $B$ in $\mathbb{Z}/e\mathbb{Z}$. Then, $A = \psi_{n, e}^{-1}(B)$ must also be $(k, \ell$)-sum-free and in fact $\{ 0, s \}$-$(k, \ell)$-sum-free. Since $A$ has size $\mu_{k, \ell}(\mathbb{Z}/e\mathbb{Z}) \cdot \frac{n}{e}$, by taking the maximum value over all $e | d$, we have that $\mu_{k, \ell}^{\{ 0, s \}}(\mathbb{Z}/n\mathbb{Z}) \ge \max_{e | d} \left\{ \mu_{k, \ell}(\mathbb{Z}/e\mathbb{Z}) \cdot \frac{n}{e} \right\}$.
    
    Because $\{ 0, s \} \subseteq \{ 0, 1, \dots, s \}$, any $\{ 0, 1, \dots, p \}$-$(k, \ell)$-sum-free set is also a $\{ 0, p \}$-$(k, \ell)$-sum-free set in $\mathbb{Z}/n\mathbb{Z}$. This gives the second lower bound, that $\mu_{k, \ell}^{\{ 0, s \}}(\mathbb{Z}/n\mathbb{Z}) \ge \gamma(n, k, \ell, s + 1)$.
\end{proof}

In order to handle the upper bound, we need the following lemma, which deals with the case that the stabilizer of $k *_{\{ 0, s \}} A$ does not contain $\langle d \rangle$. By $\langle x \rangle$, we mean the cyclic subgroup of $\mathbb{Z}/n\mathbb{Z}$ generated by $x$.

\begin{lemma} \label{lemma:0s-bound-stab}
    For a $\{ 0, s \}$-$(k, \ell)$-sum-free set $A$, let $K = \stab(k *_{\{ 0, s \}} A)$ and suppose that $K$ does not contain $\langle d \rangle$, the subgroup of $\mathbb{Z}/n\mathbb{Z}$ generated by $d$, (i.e. $K \not= \langle e \rangle$ for any $e | d$, $e > 1$). Then
    \[
        |A| \le \left\lfloor \frac{n}{k + \ell} \right\rfloor. 
    \]
\end{lemma}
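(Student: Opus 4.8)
The plan is to replay the Kneser computation behind Lemma~\ref{lemma:kneser_bound} and the $C=\{0,1\}$ upper bound, now with the two-element noise set $\{0,s\}$ in place of $\{0,1\}$; the hypothesis on $K$ is exactly what keeps the noise set from collapsing. The key observation is that $\langle s\rangle=\langle\gcd(s,n)\rangle=\langle d\rangle$ (one inclusion because $d\mid s$, the other by B\'ezout), so for any subgroup $K'$ the condition $\langle d\rangle\not\subseteq K'$ forces $s\notin K'$ (otherwise $\langle s\rangle=\langle d\rangle\subseteq K'$). Concretely, the first step is a one-step estimate: if $P\subseteq\mathbb{Z}/n\mathbb{Z}$ is nonempty and $K':=\stab(P+A+\{0,s\})$ does not contain $\langle d\rangle$, then $|P+A+\{0,s\}|\ge|P|+|A|$. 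To prove this, $s\notin K'$ makes $\{0,s\}+K'$ a union of two distinct $K'$-cosets, so $|\{0,s\}+K'|=2|K'|$, and Kneser's theorem applied to $P+A$ and $\{0,s\}$ gives
\[
    |P+A+\{0,s\}|\ \ge\ |(P+A)+K'|+2|K'|-|K'|\ \ge\ |P+A|+|K'|.
\]
Setting $H':=\stab(P+A)$, Lemma~\ref{lemma:substab} gives $H'\subseteq K'$, hence $|H'|\le|K'|$; Kneser's theorem applied to $P$ and $A$ gives $|P+A|\ge|P|+|A|-|H'|$, and combining the two yields $|P+A+\{0,s\}|\ge|P|+|A|-|H'|+|K'|\ge|P|+|A|$.

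Next I would iterate this. Since $j *_{\{0,s\}}A=\bigl((j-1)*_{\{0,s\}}A\bigr)+A+\{0,s\}$ and, for $j<k$, $k *_{\{0,s\}}A=\bigl(j *_{\{0,s\}}A\bigr)+(k-j)A+(k-j)\{0,s\}$, Lemma~\ref{lemma:substab} gives $\stab\bigl(j *_{\{0,s\}}A\bigr)\subseteq\stab\bigl(k *_{\{0,s\}}A\bigr)=K$ for every $j\le k$, so none of these stabilizers contains $\langle d\rangle$. Applying the one-step estimate and inducting on $j$ (base case $1 *_{\{0,s\}}A=A$) gives $|k *_{\{0,s\}}A|\ge k|A|$, and the same argument for $j\le\ell$ (trivial when $\ell=1$) gives $|\ell *_{\{0,s\}}A|\ge\ell|A|$. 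Finally, since $A$ is $\{0,s\}$-$(k,\ell)$-sum-free the sets $k *_{\{0,s\}}A$ and $\ell *_{\{0,s\}}A$ are disjoint, so $n\ge|k *_{\{0,s\}}A|+|\ell *_{\{0,s\}}A|\ge(k+\ell)|A|$; as $|A|$ is an integer this gives $|A|\le\left\lfloor\frac{n}{k+\ell}\right\rfloor$.

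The step I expect to require the most care is the stabilizer bookkeeping inside the one-step estimate: arranging that the $-|H'|$ loss from Kneser's theorem on $P+A$ is exactly cancelled by the $+|K'|$ gain coming from the noise set, which hinges on the chain $\stab(P+A)\subseteq\stab(P+A+\{0,s\})=K'$; and the parallel point that the condition ``$\langle d\rangle\not\subseteq\stab$'' descends from the full sum $k *_{\{0,s\}}A$ to every partial sum $j *_{\{0,s\}}A$. Once the equivalence $s\in K'\Longleftrightarrow\langle d\rangle\subseteq K'$ (for a subgroup $K'$) is recorded, the remainder is a routine induction mirroring the $C=\{0,1,\dots,c-1\}$ case.
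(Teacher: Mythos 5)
Your proof is correct and follows essentially the same route as the paper: the same inductive claim $|j *_{\{0,s\}} A| \ge j|A|$ proved by two applications of Kneser's theorem with the stabilizer bookkeeping handled by Lemma~\ref{lemma:substab}, followed by the disjointness count $n \ge (k+\ell)|A|$. The only difference is that you spell out explicitly why $|\{0,s\}+K'| = 2|K'|$ (via $\langle s\rangle = \langle d\rangle$, so $\langle d\rangle \not\subseteq K'$ forces $s \notin K'$), a point the paper leaves implicit.
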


\begin{proof}
    Our proof of this lemma follows from the following claim:
        For all $1 \le j \le k$, $|j *_{\{ 0, s \}} A| \ge j |A|$. 
        
    We prove this claim via induction. For $j = 1$, $|j *_{\{ 0, s \}} A| = |A|$, so the base case is true. Now for $2 \le j \le k$, suppose the claim is true for $j - 1$. Let $J = \stab(j *_{\{ 0, s \}} A)$. By Kneser, we have that 
    \[
        |j *_{\{ 0, s \}} A|
        \ge |A + ((j - 1) *_{\{ 0, s \}} A)| + |\{ 0, s \} + J| - |J|.
    \]
    By Lemma~\ref{lemma:substab}, $J$ is a subgroup of $K$, so $J$ also doesn't contain $\langle d \rangle$. Then, we have that $|\{ 0, s \} + J| = 2|J|$, so we can rewrite the above equation as 
    \begin{align*}
        |j *_{\{ 0, s \}} A|
        &\ge |A + ((j - 1) *_{\{ 0, s \}} A)| + |J| \\
        &\ge |A| + |(j - 1) *_{\{ 0, s \}} A| + |J| - |H| \\
        &\ge |A| + |(j - 1) *_{\{ 0, s \}} A| \\
        &\ge |A| + (j-1) |A| \\
        &= j |A|,
    \end{align*}
    where $H = \stab(A + ((j - 1) *_{\{ 0, s \}} A)) \subseteq J$ by Lemma~\ref{lemma:substab}, so $|H| \le |J|$. This completes the proof of the claim.
    
    To finish our proof, we note that $|k *_{\{ 0, s \}} A| \ge k |A|$ and $|\ell *_{\{ 0, s \}} A| \ge \ell |A|$. Since $k *_{\{ 0, s \}} A \cap \ell *_{\{ 0, s \}} A = \emptyset$, we have that $n \ge (k + \ell)|A| \implies |A| \le \lfloor \frac{n}{k + \ell} \rfloor$. 
\end{proof}

Now, we are ready to prove the upper bound.

\begin{theorem}
    For $k > \ell$, 
    \[
        \mu_{k, \ell}^{\{ 0, s \}}(\mathbb{Z}/n\mathbb{Z}) \\
        \le \max \left\{
            \max_{e | d} \left\{ \mu_{k, \ell}(\mathbb{Z}/e\mathbb{Z}) \cdot \frac{n}{e} \right\},\ 
            \left\lfloor \frac{n}{k + \ell} \right\rfloor
        \right\}.
    \]
\end{theorem}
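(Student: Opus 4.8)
The plan is to bound $|A|$ for an arbitrary nonempty $\{0,s\}$-$(k,\ell)$-sum-free subset $A$ of $\mathbb{Z}/n\mathbb{Z}$ by a dichotomy on the stabilizer $K = \stab(k *_{\{0,s\}} A)$, the two cases producing exactly the two quantities inside the asserted maximum. If $K$ does not contain $\langle d\rangle$, then the hypothesis of Lemma~\ref{lemma:0s-bound-stab} is satisfied and we get $|A| \le \lfloor n/(k+\ell)\rfloor$ directly. So the substantive case is $\langle d\rangle \subseteq K$, and there I would show that $A$ descends to a $(k,\ell)$-sum-free subset of the quotient $\mathbb{Z}/d\mathbb{Z}$, giving the bound $\mu_{k,\ell}(\mathbb{Z}/d\mathbb{Z})\cdot\tfrac{n}{d}$, which sits inside $\max_{e\mid d}\bigl\{\mu_{k,\ell}(\mathbb{Z}/e\mathbb{Z})\cdot\tfrac{n}{e}\bigr\}$.

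For that case, let $\pi\colon \mathbb{Z}/n\mathbb{Z}\to\mathbb{Z}/d\mathbb{Z}$ be the canonical projection $\psi_{n,d}$, so $\ker\pi = \langle d\rangle$. The key observation is that $d = \gcd(s,n)$ forces $\langle s\rangle = \langle d\rangle$, hence every element of $(k-1)\{0,s\} = \{0,s,2s,\dots,(k-1)s\}$ lies in $\langle d\rangle$; since also $\langle d\rangle \subseteq K = \stab(k *_{\{0,s\}} A)$, the set $k *_{\{0,s\}} A$ is a union of cosets of $\langle d\rangle$, and combining these two facts yields
\[
    k *_{\{0,s\}} A \;=\; kA + (k-1)\{0,s\} + \langle d\rangle \;=\; kA + \langle d\rangle \;=\; \pi^{-1}\bigl(k\,\pi(A)\bigr),
\]
and likewise $\pi\bigl(\ell *_{\{0,s\}} A\bigr) = \ell\,\pi(A)$ because $(\ell-1)\{0,s\}\subseteq\langle d\rangle$. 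From disjointness of $k *_{\{0,s\}} A$ and $\ell *_{\{0,s\}} A$ together with the fact that the former is a full union of $\pi$-fibers, I would deduce $k\,\pi(A)\cap\ell\,\pi(A)=\emptyset$: any $x$ in that intersection lifts to some $y\in\ell *_{\{0,s\}} A$ with $\pi(y)=x$, forcing $y\in\pi^{-1}(k\,\pi(A)) = k *_{\{0,s\}} A$, a contradiction. Hence $\pi(A)$ is $(k,\ell)$-sum-free in $\mathbb{Z}/d\mathbb{Z}$, so $|\pi(A)|\le\mu_{k,\ell}(\mathbb{Z}/d\mathbb{Z})$, and since each fiber of $\pi$ has size $n/d$,
\[
    |A| \;\le\; \bigl|\pi^{-1}(\pi(A))\bigr| \;=\; |\pi(A)|\cdot\tfrac{n}{d} \;\le\; \mu_{k,\ell}(\mathbb{Z}/d\mathbb{Z})\cdot\tfrac{n}{d} \;\le\; \max_{e\mid d}\bigl\{\mu_{k,\ell}(\mathbb{Z}/e\mathbb{Z})\cdot\tfrac{n}{e}\bigr\},
\]
the final inequality just because $d\mid d$. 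Taking the larger of the two case bounds then gives the upper half of Theorem~\ref{thm:0s-main}.

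The main obstacle — really a point requiring care rather than a serious difficulty — is the middle displayed identity: one must verify that $k *_{\{0,s\}} A$ is the \emph{entire} preimage $\pi^{-1}(k\,\pi(A))$ and not merely contained in it, which is exactly where the inclusion $(k-1)\{0,s\}\subseteq\langle d\rangle$ (which alone gives only ``$\subseteq$'') and the fact that $\langle d\rangle$ stabilizes $k *_{\{0,s\}} A$ must both be invoked. Everything after that is the elementary fiber count displayed above, plus the trivial remark that $\mu_{k,\ell}(\mathbb{Z}/d\mathbb{Z})\cdot\tfrac{n}{d}$ already appears among the terms on the right-hand side of the claimed bound.
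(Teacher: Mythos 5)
Your proposal is correct and follows essentially the same route as the paper: the same dichotomy on whether $K=\stab(k *_{\{0,s\}} A)$ contains $\langle d\rangle$, invoking Lemma~\ref{lemma:0s-bound-stab} in the negative case and, in the positive case, passing to the quotient where the noise $s$ vanishes and using that $k *_{\{0,s\}} A$ is a full union of fibers to conclude the projection of $A$ is $(k,\ell)$-sum-free. The only (harmless) difference is that you always project to $\mathbb{Z}/d\mathbb{Z}$, whereas the paper projects to $\mathbb{Z}/e\mathbb{Z}$ for the exact stabilizer $K=\langle e\rangle$ with $e\mid d$; your version in fact shows the $e=d$ term alone suffices in that case, and both land inside the stated maximum.
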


\begin{proof}
    Suppose that $A$ is a $\{ 0, s \}$-$(k, \ell)$-sum-free set in $\mathbb{Z}/n\mathbb{Z}$. Once again define the map $\psi_{n, e}$ to be the canonical projection of $\mathbb{Z}/n\mathbb{Z}$ onto $\mathbb{Z}/e\mathbb{Z}$. 
    
    First, suppose that $K = \stab(k *_{\{ 0, s \}} A) = \langle e \rangle$ for some $e | d$, so $k *_{\{ 0, s \}} A$ is a union of cosets of $\langle e \rangle$ but is not equal to $\mathbb{Z}/n\mathbb{Z}$. If $|\psi_{n, e}(A)| > \mu_{k, \ell}(\mathbb{Z}/e\mathbb{Z})$, then since $\psi_{n, e}(k *_{\{ 0, s \}} A) = k \psi_{n, e}(A)$ and $k *_{\{ 0, s \}} A = \psi_{n, e}^{-1}(\psi_{n, e}(k *_{\{ 0, s \}} A)$ is a union of cosets, $k *_{\{ 0, s \}}$ and $\ell *_{\{ 0, s \}} A$ have nontrivial intersection, contradiction on $A$ being $\{ 0, s \}$-$(k, \ell)$-sum-free. Therefore, $|\psi_{n, e}(A) \le \mu_{k, \ell}(\mathbb{Z}/e\mathbb{Z})$. In particular, this gives the bound $|A| \le \mu_{k, \ell}(\mathbb{Z}/e\mathbb{Z}) \cdot \frac{n}{e}$. Taking the maximum value over all $e | d$ gives that $\mu_{k, \ell}^{\{ 0, s \}}(\mathbb{Z}/n\mathbb{Z}) \le \max_{e | d} \left\{ \mu_{k, \ell}(\mathbb{Z}/e\mathbb{Z} \cdot \frac{n}{e} \right\}$.
    
    Otherwise, $K$ does not contain $\langle d \rangle$. Then, by Lemma~\ref{lemma:0s-bound-stab}, $\mu_{k, \ell}^{\{ 0, s \}}(\mathbb{Z}/n\mathbb{Z}) \le \lfloor \frac{n}{k + \ell} \rfloor$. Combining the two cases gives the stated result.
\end{proof}

\section{Further Questions}

When $C = \{ 0, 1, \dots, c-1 \}$, the upper and lower bounds given for $\mu_{k, \ell}^C(\mathbb{Z}/n\mathbb{Z})$ often coincide. When they do not, the two bounds are $1$ apart, and we conjecture that the lower bound holds as equality.

\begin{conjecture}
    For $c \ge 2$ and $C = \{ 0, 1, \dots, c-1 \}$,
    \[
        \mu_{k, \ell}^{C}(\mathbb{Z}/n\mathbb{Z}) 
        = \max \left\{ 0,\ \left\lfloor \frac{n + 2(c-2) - r}{k + \ell} \right\rfloor - (c-2) \right\},
    \]
    where $\delta = \gcd(n, k - \ell)$, $\chi(c, k, \ell) = \left\lfloor \frac{n + 2(c-2)}{k + \ell} \right\rfloor - (c-2)$ and $r$ is the remainder of $-k\cdot \chi(c, k, \ell) - (k-1)(c-2)$ modulo $\delta$. That is, the largest $C$-$(k, \ell)$-sum-free subset of $\mathbb{Z}/n\mathbb{Z}$ is achieved by an interval.
\end{conjecture}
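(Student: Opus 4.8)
The plan is to prove the missing upper bound $\mu_{k,\ell}^C(\mathbb{Z}/n\mathbb{Z}) \le \max\{0,\gamma(n,k,\ell,c)\}$, where $\gamma(n,k,\ell,c)=\left\lfloor \frac{n+2(c-2)-r}{k+\ell}\right\rfloor-(c-2)$ is the length of the longest $C$-$(k,\ell)$-sum-free interval; the matching lower bound is exactly that interval construction from Section~3. Since we already know $\mu_{k,\ell}^C(\mathbb{Z}/n\mathbb{Z})\le\chi(c,k,\ell)$ and $\chi\in\{\gamma,\gamma+1\}$, the only case needing work is $\chi=\gamma+1$. Writing $\rho$ for the residue $(n+2(c-2))\bmod(k+\ell)$, a short computation shows $\chi=\gamma+1$ exactly when $\rho<r$, and in particular $\rho<\delta=\gcd(n,k-\ell)$. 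So suppose toward a contradiction that $A$ is a nonempty $C$-$(k,\ell)$-sum-free set with $|A|=\chi$. Revisiting the upper-bound proof, $|A|=\chi$ forces
\[
    n \;\ge\; |k *_C A| + |\ell *_C A| \;\ge\; (k+\ell)|A| + (k+\ell-2)(c-2)
\]
to hold with total deficit exactly $\rho$, distributed over the Kneser applications building $k *_C A$ and $\ell *_C A$; hence every single invocation of \cref{lemma:kneser_bound} and of \cref{lemma:substab} is within $\rho<\delta$ of equality, and $k *_C A$, $\ell *_C A$ come very close to tiling $\mathbb{Z}/n\mathbb{Z}$.

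From here the approach is a stability (critical-pair) analysis in the spirit of Hamidoune--Plagne and Bajnok--Matzke~\cite{bajnok-max}. The near-equality at every stage means each Kneser bound $|X+Y|\ge|X+H|+|Y+H|-|H|$ and each stabilizer inclusion is nearly tight, so by a Kemperman/Vosper-type structure theorem for near-critical pairs in $\mathbb{Z}/n\mathbb{Z}$ one expects the iterated sumsets $jA+(j-1)C$ to fall into two regimes: either (i) some $jA+(j-1)C$ has a nontrivial proper stabilizer $\langle e\rangle$ with $e\mid n$ and $1<e<n$, or (ii) $A$ is a translate of an arithmetic progression in $\mathbb{Z}/n\mathbb{Z}$.

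In regime (ii), suppose $A$ has common difference $t$. If $\gcd(t,n)=1$, apply the multiplication of \cref{prop:mult} by $t^{-1}$: this turns $A$ into an interval while keeping $C$ an arithmetic progression of length $c$ with difference coprime to $n$, hence shift-mult-equivalent to $\{0,1,\dots,c-1\}$ by Section~2; the interval analysis of Section~3 then gives $|A|\le\gamma=\chi-1$, a contradiction. If $\gcd(t,n)=g>1$, then either $A$ is long enough to wrap around into a union of cosets of $\langle g\rangle$ (handled in regime (i)) or it is a short genuine progression, which should be excludable by a direct count. In regime (i), one argues that $k *_C A$ and $\ell *_C A$ are both unions of cosets of $\langle e\rangle$ — immediate when they tile $\mathbb{Z}/n\mathbb{Z}$, and otherwise read off from the structure theorem — so that $\psi_{n,e}(A)$ is $\psi_{n,e}(C)$-$(k,\ell)$-sum-free in $\mathbb{Z}/e\mathbb{Z}$. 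Here $c\le e$ is forced (if $c>e$ then $\psi_{n,e}(C)=\mathbb{Z}/e\mathbb{Z}$ and the projected sum is all of $\mathbb{Z}/e\mathbb{Z}$, contradicting sum-freeness), and $\psi_{n,e}(C)$ is a proper length-$c$ arithmetic progression with difference coprime to $e$; strong induction on $n$ then gives $|A|\le\frac{n}{e}\,\gamma(e,k,\ell,c)$, and a short estimate shows $\frac{n}{e}\,\gamma(e,k,\ell,c)\le\gamma(n,k,\ell,c)=\chi-1$, again a contradiction. Finally the degenerate case $\gamma<0$ (where the claimed answer is $0$) is covered by the same upper-bound estimate, which then leaves no room for a nonempty $A$.

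The main obstacle is the structure-theorem step and the accompanying bookkeeping: unlike the pure sumset setting, one must control near-criticality of the composite sum $kA+(k-1)C$ and separate the contributions of $A$ and of the noise set $C$ at each level, so the ``almost tiles $\mathbb{Z}/n\mathbb{Z}$'' subcases (deficit $\rho$ nonzero but smaller than $\delta$) do not immediately yield clean coset structure and require a delicate Kemperman-type case split, exactly as carried out for the $C=\{0\}$ problem in~\cite{bajnok-max}. Verifying the final numerical inequalities $\frac{n}{e}\gamma(e,k,\ell,c)\le\gamma(n,k,\ell,c)$ and the short-progression exclusions is routine but must be done with care in the corner cases where the floor functions behave irregularly.
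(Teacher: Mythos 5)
This statement is a \emph{conjecture} in the paper: the author proves only that the answer lies between $\gamma(n,k,\ell,c)$ and $\chi(n,k,\ell,c)$ (which differ by at most $1$) and supports the exact value by computer checks in small cases, so there is no proof of record for you to match, and your proposal does not close the gap either. The decisive step in your sketch --- the claim that near-equality in the chain of estimates forces, via ``a Kemperman/Vosper-type structure theorem for near-critical pairs,'' the dichotomy (i) coset structure or (ii) $A$ an arithmetic progression --- is not an available theorem in the form you need. Kneser--Kemperman--Vosper theory describes a \emph{single} sumset $X+Y$ at or just below the critical size; here the near-criticality is of the composite iterated sums $jA+(j-1)C$ for $j$ up to $k$, and you have not shown that a total deficit of $\rho<\delta$ in the inequality $n\ge(k+\ell)|A|+(k+\ell-2)(c-2)$ distributes so that \emph{each} application of \cref{lemma:kneser_bound} and \cref{lemma:substab} is individually within $\rho$ of equality (the intermediate inequalities pass through stabilizers $H\subseteq K$ of different sums, and slack can hide in the terms $|A+H|-|A|$, $(c-2)(|H|-1)$, etc., rather than adding linearly). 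Asserting that ``one expects'' the two regimes is precisely the open problem, not a reduction of it.

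Even granting the dichotomy, the remaining steps are not routine. In regime (ii) the case $\gcd(t,n)=g>1$ with $A$ a short progression is waved off as ``excludable by a direct count'' with no count given; this is exactly the kind of configuration (e.g.\ a progression with difference sharing a factor with $n$) that the analogous $C=\{0\}$ analysis of Bajnok--Matzke must treat carefully, and there the answer is \emph{not} always attained by an interval, which is a warning that the exclusion cannot be purely formal. In regime (i) you need both that $\ell *_C A$ (not just $k *_C A$) is a union of $\langle e\rangle$-cosets and the numerical inequality $\tfrac{n}{e}\,\gamma(e,k,\ell,c)\le\gamma(n,k,\ell,c)$; the latter is plausible but unproven here, and since the whole content of the conjecture is that coset-type constructions never beat the interval, you cannot simply assume it as ``a short estimate.'' So the proposal is a reasonable program, in the spirit of the critical-pair literature the paper cites, but it does not constitute a proof: the structure theorem for iterated noisy sumsets, the deficit-distribution claim, the short-progression exclusion, and the divisor comparison all remain to be established.
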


Notice that for small enough values of $n$, it is possible for $\left\lfloor \frac{n + 2(c-2) - r}{k + \ell} \right\rfloor - (c-2)$ to be negative, hence we need to compare it to $0$. 

Consider, for instance, $n = 40$, $k = 9$, and $\ell = 4$. For $C = \{ 0, 1 \}$, the upper bound in Theorem~\ref{thm:01c-main} is $3$, while the lower bound is $2$. A simple computer program verifies that the largest $\{ 0, 1 \}$-$(9, 4)$-sum-free set of $\mathbb{Z}/40\mathbb{Z})$ is $2$. For $C = \{ 0, 1, 2 \}$, the upper and lower bounds once again differ, being $2$ and $1$ respectively, and a computer program verifies that the maximum $\{ 0, 1, 2 \}$-$(9, 4)$-sum-free subset of $\mathbb{Z}/40\mathbb{Z}$ has length $1$.

We have checked using a computer that for all $2 \le c \le 10$, there are no values of $n$, $k$, and $\ell$ with $\ell < 10$, $k < 20$, and $n < 5(k + \ell)$ for which the upper and lower bounds of Theorem~\ref{thm:01c-main} differ and $\mu_{k, \ell}^{\{ 0, 1 \}}$ is equal to the upper bound, unless the upper bound is equal to $0$. 

For $C = \{ 0, s \}$, our bounds often are wider. We ask for a precise value of $\mu_{k, \ell}^C(\mathbb{Z}/n\mathbb{Z})$.

\begin{question}
    For $\gcd(n, s) \not= 1$, what is the value of $\mu_{k, \ell}^{\{ 0, s \}}(\mathbb{Z}/n\mathbb{Z})$?
\end{question}

Other values of $C$ may be interesting for study. Note that if $C$ is shift-mult-equivalent to a set contained in $\{ 0, 1, \dots, x \}$, then $A$ is $C$-$(k, \ell)$-sum-free if it is $\{ 0, 1, \dots, x \}$-$(k, \ell)$-sum-free, so the lower bound of Theorem~\ref{thm:01c-main} gives a lower bound on $\mu_{k, \ell}^C(\mathbb{Z}/n\mathbb{Z})$. An upper bound in some cases, with some careful considerations, can be attained following the methods of this paper, or by noting that if $\{ 0, x \}$ is a subset of $C$, then any $C$-$(k, \ell)$-sum-free set must also be $\{ 0, x \}$-$(k, \ell)$-sum-free, from which we attain an upper bound of $\left\lfloor \frac{n}{k + \ell} \right\rfloor$ by Theorem~\ref{thm:0s-main}. However, it is not known how to attain an upper bound better than $\left\lfloor \frac{n}{k + \ell} \right\rfloor$ in all cases.

\begin{question}
    What can we say about the value of $\mu_{k, \ell}^C(\mathbb{Z}/n\mathbb{Z})$ for other values of $C$?
\end{question}

\section{Acknowledgments}

This research was conducted at the University of Minnesota Duluth REU and was supported by NSF / DMS grant 1659047 and NSA grant H98230-18-1-0010.
I would like to thank Joe Gallian, who suggested the problem, and Mehtaab Sawhney and Aaron Berger, who gave comments on this paper.


\end{document}